\DeclareMathOperator{\Hilb}{Hilb}
\DeclareMathOperator{\im}{Im}
\title[Projective reconstruction in algebraic vision]
{Projective reconstruction in algebraic vision}
\author[A.~Ito]{Atsushi Ito}
\address{
Graduate School of Mathematics,
Nagoya University,
Furocho, Chikusaku, Nagoya,
464-8602,
Japan.
}
\email{atsushi.ito@math.nagoya-u.ac.jp}
\author[M.~Miura]{Makoto Miura}
\address{
Korea Institute for Advanced Study,
85 Hoegiro,
Dongdaemun-gu,
Seoul,
130-722,
Republic of Korea.
}
\email{miura@kias.re.kr}
\author[K.~Ueda]{Kazushi Ueda}
\address{
Graduate School of Mathematical Sciences,
The University of Tokyo,
3-8-1 Komaba,
Meguro-ku,
Tokyo,
153-8914,
Japan.}
\email{kazushi@ms.u-tokyo.ac.jp}
\date{}
\begin{document}

\maketitle
\begin{abstract}
 We discuss the geometry of rational maps from a projective space 
 of an arbitrary dimension to the product of projective spaces 
 of lower dimensions induced by linear projections.
 In particular,
 we give an algebro-geometric variant of the projective
reconstruction theorem by
Hartley and Schaffalitzky
 \cite{Hartley2009}.
\end{abstract}

\section{Introduction}\label{sc:introduction}
Let $r$ be a positive integer and
$\bsm = (m_1, \ldots, m_r)$ be a sequence of positive integers.
For each $i = 1, \ldots, r$,
take a vector space $W_i$ of dimension $m_i+1$
over a field $\bfk$,
which we assume to be an
algebraically closed field of characteristic zero unless otherwise stated%
\footnote{
This condition on the field $\bfk$
allows us to use standard tools in complex algebraic geometry,
such as the theorem of Bertini.
From the viewpoint of application to computer vision,
where the motivation for this paper comes from,
the case $\bfk = \bR$ is of particular interest.
As we mention later in this section,
the reconstruction theorem for $\bfk = \bR$
follows from the reconstruction theorem for $\bfk = \bC$.
}.
Let further $V$ be a vector space
satisfying $n \coloneqq \dim V - 1 > m_i$ for any $i =1, \ldots, r$.
A sequence $\bss = (s_1, \ldots, s_r)$ of surjective linear maps
\begin{align}
 s_i \colon V \to W_i, \quad i = 1, \ldots, r
\end{align}
induce rational maps
\begin{align}
 \varphi_i \colon \bP^n \dto \bP^{m_i}, \quad i = 1, \ldots, r
\end{align}
from $\bP^n \coloneqq \bP(V)$ to $\bP^{m_i} \coloneqq \bP(W_i)$.
We call these rational maps \emph{cameras},
with the model of a pinhole camera as a linear projection in mind.
Correspondingly,
the loci
\begin{align}
 Z_i \coloneqq \bP(\ker s_i) \subset \bP^n, \quad i = 1, \ldots, r
\end{align}
of indeterminacy are called the \emph{focal loci} of the cameras.
The closure $X$ of the image of the rational map
\begin{align}
 \bsvarphi \coloneqq 
 (\varphi_1, \dots, \varphi_r)
  \colon \bP^n \dto \bP^{\bsm} \coloneqq \prod_{i=1}^r \bP^{m_i}
\end{align}
is called the \emph{multiview variety}
in the case $n = 3$ and $\bsm = (2^r)$ 
in \cite{MR3095002},
and we use the same terminology for arbitrary $n$ and $\bsm$.
In this section, 
we assume
$|\bsm| \coloneqq m_1 + \cdots + m_r \geq n+1$,
so that the multiview variety $X$ is a proper subvariety of $ \bP^{\bsm}$.
Basic properties of multiview varieties are studied in \cite{1310.8453},
where formulas for dimensions, multidegrees, and Hilbert polynomials are obtained and the
Cohen--Macaulay property is proved.

The \emph{projective reconstruction problem} asks
if $\bsvarphi$ is determined uniquely from $X$, 
up to the inevitable ambiguity
by the action of $\PGL(n + 1,\bfk)$.
In real-life applications
where $\bfk = \bR$, $n = 3$, and $\bsm = (2^r)$,
this problem may be phrased as follows:
Assume that one is given multiple pictures of one place,
taken with various cameras
whose positions and angles are not known at the beginning.
A \emph{point correspondence} is a collection of points in the pictures,
consisting of one point in every picture,
which is the image of the same point in the place.
The set of point correspondences form an open subset of $X$.
Assume that
one can tell sufficiently many point correspondences,
say, from the features of the objects,
so that one can fix $X$ uniquely.
Now the problem is whether one can `reconstruct'
the 3-dimensional configuration of objects in the place
appearing on the pictures, together with the configuration of the cameras.

Each camera
$
 \varphi_i \colon \bP^n \dto \bP^{m_i}
$
is parametrized by
an open subset of
the projective space
$\bP(V^{\vee} \otimes W_i)$,
where the corresponding linear map $s_i$ has the full rank.
Let
\begin{align}
\label{eq:Phi}
 \Phi \colon
  \prod_{i=1}^r \bP \lb V^{\vee} \otimes W_i \rb
   \dashrightarrow  \Hilb \lb \bP^{\bsm} \rb  
\end{align}
be the rational map
sending a camera configuration $\bsvarphi$
to the multiview variety
$
 X
$
considered as a point
in the Hilbert scheme $\Hilb \lb \bP^{\bsm} \rb$ of subschemes of $\bP^{\bsm}$.
The natural right action of $\Aut (\bP^n) = \PGL(n+1, \bfk)$
on each $\bP \lb V^{\vee} \otimes W_i \rb$
induces the diagonal action on the product
$
 \prod_{i=1}^r \bP \lb V^{\vee} \otimes W_i \rb.
$
The main result of this paper is the following:

\begin{theorem}\label{th:main}
\begin{enumerate}
 \item \label{it:general}
If $\bsm \ne (1^{n+1}) \coloneqq (1, \ldots, 1)$,
then a general fiber of $\Phi$ consists of a single $\PGL(n+1, \bfk)$-orbit.
 \item \label{it:tolines}
If $\bsm = (1^{n+1})$,
then a general fiber of $\Phi$ consists of two $\PGL(n+1, \bfk)$-orbits.
 \item \label{it:dominance}
If $ | \bsm | \geq 2n-1$,
then $\Phi$ is dominant onto an irreducible component of $ \Hilb (\bP^{\bsm} )$.
\end{enumerate}
\end{theorem}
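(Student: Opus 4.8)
\emph{The plan} is to establish the reconstruction statements (\ref{it:general}) and (\ref{it:tolines}) first and then deduce the dominance statement (\ref{it:dominance}) from them by a dimension count. For (\ref{it:general}) and (\ref{it:tolines}), the first step is that for a general camera configuration $\bsvarphi$ the map $\bsvarphi \colon \bP^n \dashrightarrow X$ is birational onto $X$: a general fibre is $\bigcap_i \varphi_i^{-1}(q_i)$, an intersection of linear subspaces of codimensions $m_i$ which, for general cameras and $|\bsm| \ge n+1$, is a single reduced point. Since the coordinate projections satisfy $\pi_i|_X \circ \bsvarphi = \varphi_i$, and since two configurations lie in one $\PGL(n+1,\bfk)$-orbit exactly when they differ by a reparametrisation of $V$, recovering $\bsvarphi$ up to $\PGL(n+1,\bfk)$ is the same as recovering $\bP^n$, with its linear structure, from the pair $(X \subseteq \bP^{\bsm}, \{\pi_i|_X\}_i)$; equivalently, writing $\widehat V \coloneqq s(V) \subseteq \bigoplus_i W_i$ for the image of the total map $s = (s_i)$, one must recover the $(n+1)$-plane $\bP(\widehat V)$ from its image $X$ under the coordinate-wise projection $\bP(\bigoplus_i W_i) \dashrightarrow \bP^{\bsm}$.

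To recover $\bP^n$ I would characterise its family of lines intrinsically on $X$. A general line $\ell \subseteq \bP^n$ misses every focal locus $Z_i$ (of codimension $m_i+1 \ge 2$), so $C \coloneqq \bsvarphi(\ell) \subseteq X$ is a smooth rational curve of multidegree $(1,\dots,1)$ carried isomorphically onto a line of $\bP^{m_i}$ by each $\pi_i|_X$. The main point is the converse: a general curve $C \subseteq X$ with these properties comes from a line, because the multidegree condition forces $\bsvarphi^{-1}(C)$ to have $\mathcal{O}_{\bP^n}(1)$-degree one. The variety of lines of $\bP^n$, with its incidence structure, is then recovered from $X$, and reconstructing $\bP^n$ from its lines (the fundamental theorem of projective geometry) gives $\bsvarphi$ up to $\PGL(n+1,\bfk)$. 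One then shows that for $\bsm \ne (1^{n+1})$ this reconstruction is unique --- the extra available structure, a non-trivial incidence condition ``$\pi_i|_C$ maps $C$ onto a proper line of $\bP^{m_i}$'' when some $m_i \ge 2$, or the mismatch of the dual construction below otherwise, rules out a second orbit --- which is (\ref{it:general}). If $\bsm = (1^{n+1})$, however, every factor $\bP^{m_i} = \bP^1$ is canonically self-dual and the cokernel $V' \coloneqq \operatorname{coker}\bigl(s \colon V \to \bigoplus_i W_i\bigr)$ again has dimension $n+1$, so the induced maps $(V')^\vee \to W_i^\vee \cong W_i$ define a second, ``Gale-dual'' camera configuration of the same type $\bsm$ with the same multiview variety $X$; part (\ref{it:tolines}) then amounts to checking that $\bsvarphi$ and this dual configuration represent the only two $\PGL(n+1,\bfk)$-orbits in a general fibre.

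For (\ref{it:dominance}): the source $\prod_i \bP(V^\vee \otimes W_i)$ is irreducible of dimension $\sum_i[(n+1)(m_i+1)-1] = (n+1)|\bsm| + nr$, and by (\ref{it:general})--(\ref{it:tolines}) a general fibre of $\Phi$ is a finite union of $\PGL(n+1,\bfk)$-orbits, of dimension $(n+1)^2-1 = n^2+2n$. Hence the image of $\Phi$ is irreducible of dimension $D \coloneqq (n+1)|\bsm| + nr - n^2 - 2n$, and by generic smoothness (characteristic zero) $d\Phi$ has rank $D$ at a general point. Since $D \le \dim_{[X]}\Hilb(\bP^{\bsm}) \le h^0(X, N_{X/\bP^{\bsm}})$, it suffices to prove $h^0(X, N_{X/\bP^{\bsm}}) = D$ for a general $X$ in the image: then $\Hilb(\bP^{\bsm})$ is smooth of dimension $D$ at $[X]$, the image of $\Phi$ is an irreducible component, and $\Phi$ is smooth onto it. I would obtain this equality from the conormal sequence of $X$ in $\bP^{\bsm}$ and the Euler sequences $0 \to \mathcal{O} \to W_i \otimes \mathcal{O}_{\bP^{m_i}}(1) \to T_{\bP^{m_i}} \to 0$, using the rationality and Cohen--Macaulay structure of $X$ from \cite{1310.8453} (which yield $H^{>0}(X,\mathcal{O}_X)=0$, $(\pi_i|_X)_*\mathcal{O}_X = \mathcal{O}_{\bP^{m_i}}$, hence $h^0(X,\pi_i^*\mathcal{O}_{\bP^{m_i}}(1)) = m_i+1$), the computation reducing to a cohomology vanishing on $X$ that holds precisely when $|\bsm| \ge 2n-1$; alternatively, by semicontinuity it would be enough to exhibit a single, possibly degenerate, configuration with $h^0(N_{X/\bP^{\bsm}}) = D$.

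The steps I expect to be the main obstacles are, for (\ref{it:general})--(\ref{it:tolines}), the converse in the ``lines'' step --- verifying that a general multidegree-$(1,\dots,1)$ curve on $X$ carried isomorphically onto lines in the factors really is the image of a line of $\bP^n$ --- together with the verification, in the case $\bsm = (1^{n+1})$, that no $\PGL(n+1,\bfk)$-orbit in a general fibre lies outside the two found; and, for (\ref{it:dominance}), the normal-bundle cohomology computation, in particular isolating why $|\bsm| \ge 2n-1$ is exactly the threshold needed.
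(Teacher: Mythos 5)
There is a genuine gap at the heart of your argument for parts (\ref{it:general})--(\ref{it:tolines}). The claim that for a curve $C\subset X$ of multidegree $(1,\dots,1)$ ``the multidegree condition forces $\bsvarphi^{-1}(C)$ to have $\cO_{\bP^n}(1)$-degree one'' is false: if $\ell\subset\bP^n$ is a rational curve of degree $d$ meeting each focal locus $Z_i$ with multiplicity $d-1$, its image also has multidegree $(1,\dots,1)$. A dimension count shows that the family of such degree-$d$ curves has dimension $(d+1)(n+1)-4-(d-1)|\bsm| = (2n-2)+(d-1)(n+1-|\bsm|)$, which equals the dimension $2n-2$ of the family of lines whenever $|\bsm|=n+1$ (e.g.\ $\bsm=(2,1^{n-2})$ or $\bsm=(1^{n+1})$, both within the scope of the theorem). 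So in the boundary case ``general'' does not single out the images of lines, and even where it does (for $|\bsm|\ge n+2$) you would still need to identify the correct irreducible component of the relevant Hilbert scheme of curves on $X$ and to recover the incidence relation of lines from their images. Moreover your uniqueness claim in (\ref{it:general}) --- that ``the extra available structure \dots rules out a second orbit'' --- is an assertion, not an argument; this is precisely the delicate point that separates (\ref{it:general}) from (\ref{it:tolines}). The paper avoids lines altogether: it recovers $\bsvarphi^{-1}$ as the map given by the linear system $|\cO_X(\bsqtilde(E_1))\otimes L_1|$, where $\bsqtilde(E_1)$ is characterized intrinsically as the \emph{unique} Weil divisor on $X$ of the form $\bP^{m_1}\times Y$ (Lemmas \ref{lm:inverse} and \ref{lm:unique}); uniqueness is proved by a degeneracy-locus dimension estimate that fails exactly when $\bsm=(1^{n+1})$, where the Gale-dual configuration (which you correctly identify) produces a second such divisor and exactly two orbits.

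For part (\ref{it:dominance}) your logical skeleton ($D\le\dim_{[X]}\Hilb(\bP^{\bsm})\le h^0(N_{X/\bP^{\bsm}})$, so equality $h^0(N_{X/\bP^{\bsm}})=D$ suffices) agrees with the paper's, but the decisive computation is deferred to ``a cohomology vanishing on $X$ that holds precisely when $|\bsm|\ge 2n-1$,'' which is not the actual mechanism. In the paper the hypothesis $|\bsm|\ge 2n-1$ is used to make $X_{n-1}=\emptyset$, hence $\bsqtilde\colon\Xtilde\to X$ an isomorphism; one then computes on $\Xtilde$, which is the zero locus of a general section of the globally generated bundle $\bigoplus_i p^*\cO_{\bP^n}(1)\otimes q_i^*T_{\bP^{m_i}}(-1)$, so that $N_{\Xtilde/\bP^n\times\bP^{\bsm}}$ is this bundle restricted to $\Xtilde$ and the exact sequence $0\to\ptilde^*T_{\bP^n}\to N_{\Xtilde/\bP^n\times\bP^{\bsm}}\to\bsqtilde^*N_{X/\bP^{\bsm}}\to 0$ together with the Euler sequences gives $h^0(N_{X/\bP^{\bsm}})=\sum_i((n+1)(m_i+1)-1)-((n+1)^2-1)=D$ directly. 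Without this (or an equivalent) computation, part (\ref{it:dominance}) is not established.
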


As we see in \pref{sc:hartley}, 
\pref{th:main}.(\pref{it:general}),(\pref{it:tolines})
is an algebro-geometric 
variant of the projective reconstruction theorem by
Hartley and Schaffalitzky \cite{Hartley2009},
with a new purely algebro-geometric proof.
A result closely related to
\pref{th:main}.(\pref{it:dominance})
for $n = 3$ and $\bsm = (2^r)$
is proved in \cite[Theorem 6.3]{MR3095002}.
They use the rational map
\begin{align}
\gamma \colon G(n+1,\bigoplus_{i=1}^r W_i)/\!\!/\bG_m^r 
			\dto \scH_{n,\bsm}
\end{align}
instead of \pref{eq:Phi},
where
$G(n+1,\bigoplus_{i=1}^r W_i)/\!\!/\bG_m^r$ denotes
a GIT quotient of the Grassmannian of $(n+1)$-dimensional subspaces in 
$\bigoplus_iW_i$ by an algebraic torus, and $\scH_{n,\bsm}$ denotes
the multigraded Hilbert scheme parametrizing $\bZ^r$-homogeneous
ideals in the homogeneous coordinate ring of $\bP^\bsm$.
Their result states that the rational map $\gamma$ dominates
an irreducible component of $\scH_{n,\bsm}$.
The bound $ | \bsm | \ge 2 n - 1$ is sharp in this case.
The embedding of the space of cameras
into $\Hilb(\bP^\bsm)$ for $\bsm = (2^r)$ is also discussed in \cite{2017arXiv170709332L}.

Note that a real configuration of cameras can naturally be viewed
as a complex configuration of cameras,
and a pair of real configurations of cameras are related by an action of $\PGL(n+1, \bR)$
if and only if they are related by an action of $\PGL(n+1,\bC)$.
It follows that the reconstruction over $\bC$
in \pref{th:main}.(\pref{it:general})
implies the reconstruction over $\bR$.
See also \pref{rm:overR} for the fact that
\pref{th:main}.(\pref{it:tolines}) also holds over $\bR$.

This paper is organized as follows:
In \pref{sc:projection},
we collect basic facts about linear projections.
The statements (\pref{it:general}),
(\pref{it:tolines}), and
(\pref{it:dominance})
in \pref{th:main}
are proved in Sections \ref{sc:pr1},
\ref{sc:pr2}, and
\ref{sc:dominance} respectively.
In \pref{sc:hartley},
we clarify the relation of \pref{th:main} to results by Hartley and Schaffalitzky.

\begin{acknowledgments}
We thank the anonymous referee for suggestions for improvements.
A.~I.~was supported by Grant-in-Aid 
for Scientific Research (14J01881, 17K14162).
M.~M.~was supported by Korea Institute for Advanced Study.
K.~U.~was partially supported by Grant-in-Aid for Scientific Research
(15KT0105,
16K13743,
16H03930).
\end{acknowledgments}

\section{The geometry of linear projections}
 \label{sc:projection}


Let $V$ and $W$ be vector spaces of dimensions $n+1$ and $m+1$ respectively with $n > m$, and
set $\bP^n \coloneqq \bP(V)$ and $\bP^{m} \coloneqq \bP(W)$.
Let further
$p \colon \bP^n \times \bP^m \rightarrow \bP^n$ and
$q \colon \bP^n \times \bP^m \rightarrow \bP^m$
be the projections to the first and the second factors.
A surjective linear map
$
 s \colon V \rightarrow W
$
induces a \emph{linear projection}
\begin{align}
 \varphi \colon \bP^n \dto \bP^{m},
\end{align}
which is a dominant rational map
from $\bP^n$ to $\bP^m$.

The locus
$
 Z \coloneqq \bP(\ker s) \subset \bP^n
$
of indeterminacy of $\varphi$
can be eliminated by the blow-up
\begin{align}
 \ptilde \colon \Xtilde \coloneqq \Bl_Z \bP^n \to \bP^n
\end{align}
along $Z$,
i.e., there exists a morphism
$
 \qtilde \colon \Xtilde \to \bP^m
$
making the diagram
\begin{align} \label{eq:diagram1}
\begin{aligned}
\xymatrix{
 & \Xtilde \ar[ld]_{\ptilde} \ar[rd]^{\qtilde} & \\ 
 \bP^n \ar@{-->}[rr]^{\varphi} &  & \bP^m  \\
}
\end{aligned}
\end{align}
commutative.

The exceptional divisor
$
 E
  \coloneqq \ptilde^{-1}(Z)
  \subset \Xtilde
$
is the $\bP^m$-bundle over $Z$
obtained as
the projectivization
of the normal bundle
$N_{Z/\bP^n} \cong \cO_{\bP^n}(1)^{\oplus {m+1}} $
of $Z$ in $\bP^n$.
For a hyperplane $H \subset \bP^n$
containing $Z$,
the closure of the image of $H$ by $\varphi$ is a hyperplane $H'$ in $\bP^m$.
Since $\ptilde^* H=\qtilde^* H' +E$,
we have
\begin{align} \label{eq:OE}
 \cO_{\Xtilde}(E) \cong \ptilde^* \cO_{\bP^n} (H) \otimes  \qtilde^* \cO_{\bP^m} (-H') \cong \ptilde^* \cO_{\bP^n} (1) \otimes  \qtilde^* \cO_{\bP^m} (-1).
\end{align}
The morphism 
\begin{align}
 \ptilde \times \qtilde \colon \Xtilde \to \bP^n \times \bP^m
\end{align}
is an embedding,
which allows one to identify $\Xtilde$ with the closure of the graph of the rational map $\varphi$.
Under this embedding,
the morphisms $\ptilde$ and $\qtilde$ are restrictions of the projections $p$ and $q$ respectively.

The Euler sequence
\begin{align} \label{eq:Eulerm}
0 \rightarrow \cO_{\bP^{m}}(-1) \rightarrow W \otimes  \cO_{\bP^{m}} \rightarrow T_{\bP^{m}}(-1) \rightarrow 0
\end{align}
gives
$
 H^0\lb T_{\bP^{m}}(-1)\rb \cong W,
$
which together with 
$
 H^0(\cO_{\bP^n}(1)) \cong V^{\dual}
$
shows
\begin{align}\label{eq:sectionnm}
 H^0 \lb p^* \cO_{\bP^n}(1) \otimes q^* T_{\bP^{m}}(-1) \rb
  \cong H^0 \lb \cO_{\bP^n}(1) \rb \otimes H^0 \lb  T_{\bP^{m}}(-1) \rb
  \cong V^\dual \otimes W.
\end{align}
Let $s_{\bP}$ be the element of
$
 H^0 \lb p^* \cO_{\bP^n}(1) \otimes q^* T_{\bP^{m}}(-1) \rb
$
corresponding to $s \in V^\dual \otimes W$
under the isomorphism \pref{eq:sectionnm}.
Since $T_{\bP^m}(-1)$ is the universal quotient bundle on $\bP^m$,
a point $(\ell_1, \ell_2) \in \bP^n \times \bP^m$
(i.e., a pair of one-dimensional subspaces $\ell_1 \subset V$ and $\ell_2 \subset W$)
is in $s_{\bP}^{-1}(0)$
if and only if $s(\ell_1) \subset \ell_2$.
In other words,
the zero locus of $s_{\bP}$ is precisely the graph $\Xtilde$ of $\varphi$;
\begin{align} \label{eq:sP0}
 s_{\bP}^{-1}(0)
  = \Xtilde
  \subset \bP^n \times \bP^m.
\end{align}
The pull-back of the Euler sequence \pref{eq:Eulerm} to $\Xtilde$
tensored with $\ptilde^* \cO_{\bP^n}(1)$ gives
\begin{align} \label{eq:EulerXt}
 0
  \to \ptilde^* \cO_{\bP^n}(1) \otimes \qtilde^*\cO_{\bP^m}(-1)
  \to \ptilde^* \cO_{\bP^n}(1) \otimes W
  \to \ptilde^* \cO_{\bP^n}(1) \otimes \qtilde^* T_{\bP^m}(-1)
  \to 0.
\end{align}
Let
$
 s_{\Xtilde} \in H^0 \lb \Xtilde, \ptilde^* \cO_{\bP^n}(1) \otimes W \rb
$
be the section corresponding to
$
 s \in V^\dual \otimes W
$
under the isomorphism
\begin{align}
 H^0 \lb  \Xtilde, \ptilde^* \cO_{\bP^n}(1) \otimes W \rb
  &\cong H^0 \lb \bP^n, \ptilde_* \ptilde^* \cO_{\bP^n}(1) \rb \otimes W \\
  &\cong H^0 \lb \bP^n, \cO_{\bP^n}(1) \rb \otimes W \\
  &\cong V^\dual \otimes W.
\end{align}
The section $s_{\Xtilde}$ lies in the image of the injection
\begin{align}
 H^0 \lb \Xtilde, \ptilde^* \cO_{\bP^n}(1) \otimes \qtilde^*\cO_{\bP^m}(-1) \rb
  \hookrightarrow H^0 \lb \Xtilde, \ptilde^* \cO_{\bP^n}(1) \otimes W \rb
\end{align}
induced by \pref{eq:EulerXt},
since its image by the map
\begin{align}
 H^0 \lb  \Xtilde, \ptilde^* \cO_{\bP^n}(1) \otimes W \rb
  \to H^0 \lb  \Xtilde, \ptilde^* \cO_{\bP^n}(1) \otimes \qtilde^* T_{\bP^m}(-1) \rb
\end{align}
induced by \pref{eq:EulerXt}
is zero by \pref{eq:sP0}.
It follows from \pref{eq:OE} and
$
 h^0 \lb \cO_{\Xtilde}(E) \rb = 1
$
that $s_{\Xtilde}^{-1}(0) = E$.

\section{Projective reconstruction
in the case 
$\pmb{m} \ne (1^{n+1})$
}
\label{sc:pr1}

We keep the same notations as in \pref{sc:introduction},
and write the projections as
$
 p \colon \bP^n \times \bP^\bsm \to \bP^n,
$
$
 q_i \colon \bP^n \times \bP^\bsm \to \bP^{m_i},
$
$
 \bsq \coloneqq (q_1, \ldots, q_r) \colon \bP^n \times \bP^\bsm \to \bP^\bsm
$
and
$
 \varpi_i \colon \bP^\bsm \to \bP^{m_i}.
$
We do not assume $|\bsm| \geq n+1$ unless otherwise stated.
The Euler sequence
\begin{align}
0 \rightarrow \cO_{\bP^{m_i}}(-1) \rightarrow W_i \otimes  \cO_{\bP^{m_i}} \rightarrow T_{\bP^{m_i}}(-1) \rightarrow 0
\end{align}
gives $H^0\lb T_{\bP^{m_i}}(-1) \rb=W_i $,
which together with
$
 H^0(\cO_{\bP^n}(1))=V^{\dual}
$
shows
\begin{align}\label{eq:section}
 H^0 \lb \bP^n \times \bP^\bsm, \bigoplus_{i=1}^r p^* \cO_{\bP^n}(1) \otimes q_i^* T_{\bP^{m_i}}(-1) \rb
  = \bigoplus_{i=1}^r V^\dual \otimes W_i \cong \bigoplus_{i=1}^r \Hom (V, W_i).
\end{align}
We abuse notation and identify $\bss \in  \bigoplus_{i=1}^r \Hom (V, W_i)$
with the corresponding global section of $  \bigoplus_{i=1}^r p^* \cO_{\bP^n}(1) \otimes q_i^* T_{\bP^{m_i}}(-1) $ on $\bP^n \times \bP^\bsm$.
Let $\Xtilde \subset \bP^n \times \bP^\bsm$ be 
the closure of the graph of the rational map
$
 \bsvarphi \colon \bP^n \dto \bP^\bsm,
$
and set
$
 \ptilde \coloneqq p|_\Xtilde,
$
$
 \qtilde_i \coloneqq q_i|_\Xtilde,
$
and
$
 \bsqtilde \coloneqq \bsq|_\Xtilde,
$
so that we have the diagram
\begin{align}
				\label{eq:xtilde}
\begin{aligned}
\xymatrix{
 & \Xtilde \ar[ld]_{\ptilde} \ar[rd]^{\bsqtilde} & \\ 
 \bP^n \ar@{-->}[rr]^{\bsvarphi} & & X \ar@{}[r]|*{\subset} & \bP^\bsm.  \\
}
\end{aligned}
\end{align}

In all the statements,
such as lemmas, propositions, and theorems,
through the rest of this paper,
we assume that the section $\bss$
is general.

\begin{lemma}\label{lm:fiber}
Let $Z_\bss \subset  \bP^n \times \bP^\bsm$ be the zero locus
of the section $\bss $ in \pref{eq:section}.
For $z \in \bP^n$,
{\black one has}
\begin{align}\label{eq:fiber}
Z_\bss \cap p^{-1}(z)
 =  \{ z\} \times \prod_{ i \colon z \nin Z_i} \varphi_i (z) \times \prod_{ i \colon z \in Z_i}  \bP^{m_i} .
\end{align}
\end{lemma}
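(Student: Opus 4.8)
The plan is to compute $Z_\bss \cap p^{-1}(z)$ directly from the description of the section $\bss$ in \pref{eq:section}, making the vanishing condition explicit one fibre at a time by means of the Euler sequences.

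First I would unwind the bundle carrying the section. Pulling back the Euler sequence on $\bP^{m_i}$ along $q_i$ and tensoring with the line bundle $p^*\cO_{\bP^n}(1)$ gives a short exact sequence
\begin{align*}
 0 \to p^*\cO_{\bP^n}(1) \otimes q_i^*\cO_{\bP^{m_i}}(-1)
 &\to p^*\cO_{\bP^n}(1) \otimes W_i \otimes \cO_{\bP^n \times \bP^\bsm} \\
 &\to p^*\cO_{\bP^n}(1) \otimes q_i^* T_{\bP^{m_i}}(-1) \to 0 .
\end{align*}
The component $s_i \in V^\dual \otimes W_i = H^0(\bP^n, \cO_{\bP^n}(1)) \otimes W_i$ of $\bss$ defines a section $\sigma_i$ of $p^*\cO_{\bP^n}(1) \otimes W_i$, constant in the $\bP^\bsm$-direction, and unwinding the isomorphism in \pref{eq:section} shows that the $i$-th component of $\bss$ is precisely the image of $\sigma_i$ under the quotient map above (both spaces of sections being $V^\dual \otimes W_i$, with the induced map the identity).

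Next I would work fibrewise. Fix $z = [v] \in \bP^n$ and a point $x = ([w_1], \dots, [w_r]) \in \bP^\bsm$ over $z$. The fibre of $q_i^*\cO_{\bP^{m_i}}(-1)$ at $x$ is the line $\bfk w_i \subset W_i$, so by exactness the $i$-th component of $\bss$ vanishes at $(z,x)$ if and only if the value of $\sigma_i$ there — which, after trivializing $p^*\cO_{\bP^n}(1)$ near $z$, is $s_i(v)$ up to a nonzero scalar — lies in $\bfk w_i$. Hence $(z,x) \in Z_\bss$ if and only if $s_i(v) \in \bfk w_i$ for every $i = 1, \dots, r$.

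Finally I would run the case analysis over each index $i$: if $z \nin Z_i$, i.e. $v \nin \ker s_i$, then $s_i(v) \ne 0$, so $s_i(v) \in \bfk w_i$ is equivalent to $[w_i] = [s_i(v)] = \varphi_i(z)$; if $z \in Z_i$, i.e. $v \in \ker s_i$, then $s_i(v) = 0 \in \bfk w_i$ with no constraint on $w_i$. Taking the product over all $i$ yields \pref{eq:fiber}. The only delicate point is the identification in the second step — verifying that, under \pref{eq:section}, the value of the $i$-th component of $\bss$ at $(z,x)$ is the class of $s_i(v)$ in $W_i/\bfk w_i$; once this is in place the remainder is elementary linear algebra.
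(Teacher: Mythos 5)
Your argument is correct and follows essentially the same route as the paper: restrict $\bss$ to the fibre $p^{-1}(z)$, identify it with $(s_1(v),\dots,s_r(v))$, and observe that the zero locus of the section of $T_{\bP^{m_i}}(-1)$ attached to $s_i(v)$ is $\varphi_i(z)$ if $s_i(v)\ne 0$ and all of $\bP^{m_i}$ otherwise. You merely make explicit, via the Euler sequence, the standard fact the paper invokes without comment, namely that the section of $T_{\bP^{m_i}}(-1)$ corresponding to $w\in W_i$ vanishes exactly where $w$ lies in the tautological line.
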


\begin{proof}
Let $v \in V$ be a vector corresponding to $z \in \bP^n=\bP(V)$.
Then 
$Z_\bss \cap p^{-1}(z) \subset \{z\} \times  \bP^\bsm$ coincides with the zero locus of the section
\begin{align}
(s_1(v), \dots,s_r(v)) \in \bigoplus_{i=1}^r  W_i  \cong H^0 \lb  \bP^\bsm, \bigoplus_{i=1}^r \varpi_i^* T_{\bP^{m_i}}(-1) \rb.
\end{align}
For each $i$,
the zero locus of $s_i(v) \in H^0(\bP^{m_i}, T_{\bP^{m_i}}(-1))$ is $\varphi_{i}(z)$ (resp.\ $\bP^{m_i}$) if $s_i(v) \neq 0$ (resp.\ $s_i(v) =0$).
Since $s_i(v) =0$ if and only if $z \in Z_i$, we have \pref{eq:fiber}.
\end{proof}

\begin{lemma} \label{lm:Zs=X}
The zero
locus $Z_\bss$
coincides with $\Xtilde$.
\end{lemma}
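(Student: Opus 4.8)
The plan is to show the two subschemes $Z_{\bss}$ and $\Xtilde$ of $\bP^n \times \bP^\bsm$ agree by comparing them fiberwise over $\bP^n$ via the projection $p$, and then using a dimension/irreducibility argument to upgrade a set-theoretic equality to an equality of schemes (or at least of the closures, which is what matters here). The key input is \pref{lm:fiber}, which computes $Z_{\bss} \cap p^{-1}(z)$ explicitly for every $z \in \bP^n$.

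First I would observe that $\Xtilde \subseteq Z_{\bss}$. Indeed, over the locus $U \coloneqq \bP^n \setminus \bigcup_i Z_i$ where $\bsvarphi$ is a morphism, the graph of $\bsvarphi$ is exactly $\{(z, \varphi_1(z), \dots, \varphi_r(z)) : z \in U\}$, which by \pref{lm:fiber} equals $Z_{\bss} \cap p^{-1}(U)$; since $Z_{\bss}$ is closed, taking closures gives $\Xtilde \subseteq Z_{\bss}$. Next I would compute the dimension of $Z_{\bss}$. From \pref{lm:fiber}, the fiber of $Z_{\bss} \to \bP^n$ over a general point $z$ (one lying in no $Z_i$) is a single point, while over a point in exactly the focal loci $Z_i$ for $i$ in some index set $I$ the fiber has dimension $\sum_{i \in I} m_i$; but $Z_i$ has codimension $m_i$ in $\bP^n$, so each such stratum of the base has dimension $\le n - \sum_{i\in I} m_i$ (using that the $Z_i$ are general, hence meet properly), and the corresponding piece of $Z_{\bss}$ has dimension $\le n$. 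Hence every component of $Z_{\bss}$ has dimension exactly $n$, the same as $\dim \Xtilde$.

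Then I would argue irreducibility: $Z_{\bss}$ is the zero locus of a general section of the globally generated vector bundle $\cE \coloneqq \bigoplus_{i=1}^r p^*\cO_{\bP^n}(1) \otimes q_i^* T_{\bP^{m_i}}(-1)$ of rank $|\bsm|$ on the smooth variety $\bP^n \times \bP^\bsm$ of dimension $n + |\bsm|$, so by a Bertini-type theorem (e.g.\ the connectedness statement for zero loci of general sections of ample-enough globally generated bundles, or directly because $Z_{\bss}$ dominates the irreducible $\bP^n$ with generically reduced irreducible — in fact single-point — fibers) $Z_{\bss}$ is irreducible and generically reduced. Combined with $\Xtilde \subseteq Z_{\bss}$, $\Xtilde$ irreducible of the same dimension $n$, and $Z_{\bss}$ having a unique $n$-dimensional component, this forces $Z_{\bss} = \Xtilde$ as sets, and then genericity of $\bss$ (giving reducedness of $Z_{\bss}$, e.g.\ via the Eagon--Northcott/Koszul complex being a resolution since the codimension is expected) gives the scheme-theoretic equality.

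\textbf{The main obstacle} I anticipate is the dimension bookkeeping over the focal loci: one must verify that for a \emph{general} section $\bss$ the focal loci $Z_i = \bP(\ker s_i)$ are in sufficiently general position that all the strata $\bigcap_{i \in I} Z_i$ have the expected codimension $\sum_{i \in I} m_i$ (which can fail if $\sum_{i\in I} m_i > n$, in which case the stratum should be empty), so that no higher-dimensional component of $Z_{\bss}$ sneaks in over a focal stratum. Once that is in hand, the rest is a standard "closed subvariety of the same pure dimension inside an irreducible variety" argument. An alternative route that sidesteps some of this is to note directly from \pref{lm:fiber} that $Z_{\bss}$ and $\Xtilde$ have the same fiber over every $z$ in the dense open $U$, that both are closed, and that $Z_{\bss}$ has no embedded or extra components because it is a general complete intersection section of a nice bundle — but either way the focal-loci dimension count is the crux.
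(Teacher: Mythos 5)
Your overall strategy --- establish $\Xtilde \subseteq Z_\bss$ from \pref{lm:fiber}, show that $Z_\bss$ is irreducible of dimension $n$, and conclude by comparing two irreducible closed subsets of the same dimension --- is the same as the paper's; the only real difference is how irreducibility of $Z_\bss$ is obtained. The paper uses the Bertini-type theorem for globally generated bundles solely to get that $Z_\bss$ is \emph{smooth} of pure dimension $n$, and then deduces connectedness (hence irreducibility) from the fact that every fiber of $Z_\bss \to \bP^n$ is irreducible by \pref{lm:fiber}. Of your two suggested routes to irreducibility, the first (a connectedness theorem for zero loci of ``ample-enough'' globally generated bundles) is not available here: the bundle $\bigoplus_i p^*\cO_{\bP^n}(1)\otimes q_i^*T_{\bP^{m_i}}(-1)$ is globally generated but not ample (its restriction to a fiber $\{z\}\times\bP^{\bsm}$ is pulled back under the projections $\varpi_i$), so Debarre/Sommese-type connectedness statements do not apply.

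Your second route (a unique $n$-dimensional component, necessarily dominating $\bP^n$) is the right idea, but as written it does not close because of an arithmetic slip: $Z_i=\bP(\ker s_i)$ has codimension $m_i+1$ in $\bP^n$, not $m_i$ (for $n=3$, $m_i=2$ the focal locus is a point, not a line). With codimension $m_i$ your bound on the piece of $Z_\bss$ lying over the stratum indexed by $I\neq\emptyset$ is only $\le n$, which is exactly the borderline at which an extra component could sit over a focal stratum --- the obstacle you yourself flag. With the correct codimension the bound becomes $n-|I|<n$, while every component of $Z_\bss$ has dimension $\ge n$ by Krull's height theorem (it is the zero locus of a section of a rank-$|\bsm|$ bundle on an $(n+|\bsm|)$-dimensional smooth ambient). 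Hence no component of $Z_\bss$ lies over $\bigcup_i Z_i$, every component meets $p^{-1}\bigl(\bP^n\setminus\bigcup_i Z_i\bigr)$, where $Z_\bss$ is the graph of $\bsvarphi$ and therefore irreducible, and $Z_\bss=\Xtilde$ follows (the general position of the $\bigcap_{i\in I}\ker s_i$ is automatic since the $\ker s_i$ are general linear subspaces). So, once corrected, your stratification argument is a clean self-contained alternative to the paper's smoothness-plus-irreducible-fibers argument; if you also want the scheme-theoretic statement you still need reducedness of $Z_\bss$, for which either the Bertini smoothness the paper quotes or your Koszul/Eagon--Northcott remark suffices.
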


\begin{proof}
Since $\bP^n$ is irreducible,
the closure $\Xtilde$ of the graph of $\bsvarphi$ is irreducible of dimension $n$.
By \pref{lm:fiber},
the generic point of $\Xtilde$ and hence $\Xtilde$ itself
are contained in $Z_\bss$.
Euler sequences show that
$T_{\bP^{m_i}}(-1) $ are globally generated,
and hence so is $\bigoplus_{i=1}^r p^* \cO_{\bP^n}(1) \otimes q_i^* T_{\bP^{m_i}}(-1)$.
Since $\bss$ is a general section
of a globally generated bundle,
the zero of $\bss$ is smooth of dimension $n$
by a generalization of the theorem of Bertini
\cite[Theorem 1.10]{MR1201388}.
Since $Z_\bss$ is smooth and all fibers of $Z_\bss \rightarrow \bP^n$ are irreducible by \pref{lm:fiber},
$Z_\bss$ is irreducible as well.
 It follows that $\Xtilde$ and $Z_\bss$ are equal,
 since $\Xtilde \subset Z_\bss$ and they are irreducible of the same dimension.
\end{proof}

We regard the section
$
 \bsq_* \bss \in H^0 \lb \bP^\bsm, V^\dual \otimes \bigoplus_{i=1}^r \varpi_i^* T_{\bP^{m_i}}(-1) \rb
$
as a morphism
\begin{align}
 V \otimes \cO_{\bP^\bsm} \to \bigoplus_{i=1}^r \varpi_i^* T_{\bP^{m_i}}(-1)
\end{align}
on $ \bP^\bsm$.
It follows from the definition of $Z_{\bss}=\Xtilde$ that
\begin{align} \label{eq:fiber_q}
 \bsqtilde^{-1}(x) = 
\bP\lb \ker \lb \bsq_* \bss \otimes k(x) \rb \rb
 \times \{x\} \subset \bP^n \times \bP^\bsm
\end{align}
for any $x \in \bP^\bsm$,
where 
 \[
 \bsq_* \bss \otimes k(x)
 \colon V  \to \bigoplus_{i=1}^r \varpi_i^* T_{\bP^{m_i}}(-1) 
 \otimes 
 k(x) 
 \]
 is the induced linear map tensoring by $k(x)$.
For $0 \le j \le \min \{n+1, |\bsm|\}$,
let
$
 X_j \subset \bP^\bsm 
$
be the \emph{$j$-th degeneracy locus} of
$
 \bsq_* \bss
$
defined as the zero of
\begin{align} \label{eq:degeneracy}
 \lb \bsq_* \bss \rb^{\wedge (j+1)}
  \colon \bigwedge^{j+1} V \otimes \cO_{\bP^\bsm} \to \bigwedge^{j+1} \bigoplus_{i=1}^r \varpi_i^* T_{\bP^{m_i}}(-1),
\end{align}
that is,
the locus where the rank of $\bsq_* \bss  $ is at most $j$.
Hence
one has
\begin{align} \label{eq:q_*s}
 \dim \ker \lb \bsq_* \bss \otimes k(x) \rb
  = n + 1 - j
\end{align}
for $x \in X_j \setminus X_{j-1}$.
In particular,
one has
\begin{align} \label{eq:X=X_n}
 X =X_n.
\end{align}
 As $T_{\bP^{m_i}}(-1)$ is globally generated,
 so is $V^\dual \otimes \bigoplus_{i=1}^r \varpi_i^* T_{\bP^{m_i}}(-1)$.
Since
$
 \bsq_* \bss 
$
is a general section of a globally generated vector bundle,
one has
$X_j=\emptyset$ or
\begin{align}
 \codim \lb X_j \middle/ \, \bP^\bsm \rb = (n+1-j)( | \bsm |- j)
\end{align}
by
\cite[Theorem 2.8]{Ottaviani}.
If $ | \bsm | \ge n+1$,
the dimension of $X_{n-1} $ is at most
\begin{align}\label{eq:dimX_n-1}
| \bsm |- (n+1-(n-1))( | \bsm |-(n-1))
  &=| \bsm |-2( | \bsm |-n+1) \\
  &= 2n- | \bsm |-2 \label{eq:dimX_n-1-2} \\
  &\le n-3.
\end{align}
Since $ \bsqtilde : \Xtilde \to X$ is an isomorphism over $X\setminus X_{n-1}$ by \pref{eq:fiber_q} and $\Xtilde$ is smooth,
$X \setminus X_{n-1}$ is smooth.
Thus $X = X_n$ is smooth in codimension one.
Since $X_n$ is Cohen-Macaulay by \cite[Chapter II]{MR770932},
$X$ is normal. 

A morphism is said to be \emph{small}
if it is an isomorphism in codimension one.

\begin{lemma}\label{lm:small}
If $ | \bsm | \ge n+1$, the morphism $\bsqtilde \colon \Xtilde \to X$ is small.
\end{lemma}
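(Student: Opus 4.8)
The plan is to use the fibre description \pref{eq:fiber_q}, which identifies $\bsqtilde^{-1}(x)$ with the projective space $\bP(\ker(\bsq_*\bss)_x)$, together with the dimension count \pref{eq:q_*s} for the degeneracy loci $X_j$. Over the open locus $X \setminus X_{n-1}$ the kernel of $(\bsq_*\bss)_x$ is $1$-dimensional by \pref{eq:q_*s} with $j = n$, so the fibre of $\bsqtilde$ over such $x$ is a single reduced point; hence $\bsqtilde$ is an isomorphism over $X \setminus X_{n-1}$ (it is a proper bijective morphism between varieties, and $X$ is normal, so it is an isomorphism there). It therefore remains to check that the exceptional locus is contained in codimension $\geq 2$ on both sides.

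First I would bound the codimension of $X_{n-1}$ in $X = X_n$. By the displayed estimate \pref{eq:dimX_n-1}--\pref{eq:dimX_n-1-2}, $\dim X_{n-1} \leq 2n - |\bsm| - 2$, while $\dim X = \dim X_n = n$ (since $\Xtilde \to X$ is dominant from an $n$-dimensional irreducible variety, or directly from the codimension formula with $j = n$, which gives $\codim(X_n/\bP^\bsm) = |\bsm| - n$). Thus
\begin{align}
 \codim(X_{n-1}/X) \;\geq\; n - (2n - |\bsm| - 2) \;=\; |\bsm| - n + 2 \;\geq\; 2,
\end{align}
using $|\bsm| \geq n$; in fact the hypothesis $|\bsm| \geq n+1$ gives $\geq 3$. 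So $\bsqtilde$ is an isomorphism away from a closed subset of codimension $\geq 2$ in $X$.

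Second, I would bound the codimension in $\Xtilde$ of the preimage $\bsqtilde^{-1}(X_{n-1})$, i.e.\ the locus where $\bsqtilde$ fails to be injective. Over a point $x \in X_j \setminus X_{j-1}$ the fibre $\bsqtilde^{-1}(x)$ is $\bP^{n-j}$ of dimension $n - j$, so
\begin{align}
 \dim \bsqtilde^{-1}(X_j \setminus X_{j-1}) \;\leq\; \dim X_j + (n - j).
\end{align}
Since $\Xtilde$ is irreducible of dimension $n$, the codimension in $\Xtilde$ of this locus is at least $n - \dim X_j - (n - j) = j - \dim X_j = j - \big(|\bsm| - (n+1-j)(|\bsm|-j)\big)$, provided $X_j \neq \emptyset$. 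For $j = n-1$ this equals $(n-1) - (2n - |\bsm| - 2) = |\bsm| - n + 1 \geq 2$; for smaller $j$ the quantity $(n+1-j)(|\bsm|-j)$ only grows, so the bound is at least as good. Hence $\bsqtilde^{-1}(X_{n-1})$ has codimension $\geq 2$ in $\Xtilde$, and $\bsqtilde$ restricted to its complement is an isomorphism onto $X \setminus X_{n-1}$.

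Combining the two bounds, $\bsqtilde$ is an isomorphism over the complement of a codimension-$\geq 2$ closed subset of $X$, with exceptional locus of codimension $\geq 2$ in $\Xtilde$; that is, $\bsqtilde$ is small. The one point requiring a little care — the main obstacle — is justifying that a proper bijective morphism onto the normal variety $X \setminus X_{n-1}$ is genuinely an isomorphism (not merely a bijection of sets): this uses that $\Xtilde$ is reduced, the fibres are reduced points (the rank of $\bsq_*\bss$ is exactly $n$ there, so $\ker$ is a line and $\bP(\ker)$ is a reduced point, not a fat point), and Zariski's main theorem together with normality of the target. Everything else is the bookkeeping with the degeneracy-locus dimension formula \cite[Theorem 2.8]{Ottaviani} already invoked in the excerpt.
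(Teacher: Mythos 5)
Your proof is correct and follows essentially the same route as the paper's: an isomorphism over $X_n \setminus X_{n-1}$ via \pref{eq:fiber_q}, followed by the degeneracy-locus dimension count $\dim \bsqtilde^{-1}(X_j \setminus X_{j-1}) \le \dim X_j + (n-j)$ maximized over $j \le n-1$. Your extra care in checking the codimension on the $X$-side (already implicit in the paper's earlier bound $\dim X_{n-1} \le n-3$) and in justifying the set-theoretic bijection as an isomorphism via normality and Zariski's main theorem is sound but not a different argument.
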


\begin{proof}
It follows from \pref{eq:fiber_q}
that $\bsqtilde$ is an isomorphism over $X_n \setminus X_{n-1}$ and
\begin{align}
 \dim \bsqtilde^{-1} (X_{n-1})
  &= \max \lc \dim \lb X_j \setminus X_{j-1} \rb + n-j \relmid 0 \le j \le n-1 \rc \\
  &\le \max \lc| \bsm |- (n+1-j)( | \bsm |-j) + n-j \relmid 0 \le j \le n-1 \rc \\
  &= \max \lc j-(n-j)( | \bsm |-j-1) \relmid 0 \le j \le n-1 \rc \\
  &= 2n- | \bsm |-1 \\
  &\le n-2,
\end{align}
hence $\bsqtilde$ is small.
\end{proof}

Since
$
 s_i \colon p^* \cO_{\bP^n}(-1) \to q_i^* T_{\bP^{m_i}}(-1)
$
is zero on $\Xtilde$,
we see by a similar argument in \pref{sc:projection} 
that its restriction $s_i |_\Xtilde$ is a global section of
$
 \ptilde^* \cO_{\bP^n}(1) \otimes \qtilde_i^*\cO_{\bP^{m_i}}(-1)
$
by the exact sequence on $\Xtilde$
\begin{align}
0 \rightarrow  \ptilde^* \cO_{\bP^n}(1) \otimes \qtilde_i^*\cO_{\bP^{m_i}}(-1) \rightarrow  \ptilde^* \cO_{\bP^n}(1) \otimes W_i  \rightarrow  \ptilde^* \cO_{\bP^n}(1) \otimes \qtilde_i^* T_{\bP^{m_i}}(-1) \rightarrow 0.
\end{align}
Let $E_i \subset \Xtilde$ be the Cartier divisor defined by this section.

By \pref{eq:sP0}, $\Bl_{Z_i} \bP^n  \subset \bP^n \times \bP^{m_i}$
is the zero locus of $(s_i)_{\bP}$,
where we use the notation in  \pref{sc:projection}.
Since $\Xtilde$ is the zero locus of $\bss=(s_1,\dots,s_r)$,
$\ptilde : \Xtilde \rightarrow \bP^n$ factors as $\Xtilde \rightarrow\Bl_{Z_i} \bP^n  \rightarrow  \bP^n$ by projections $ \bP^n \times \bP^\bsm \to \bP^n \times \bP^{m_i} \to \bP^n$.
As in the last paragraph in \pref{sc:projection},
$s_i$ induces a global section $(s_i)_{\Bl_{Z_i} \bP^n}$ on $\Bl_{Z_i} \bP^n$ whose zero locus in the exceptional divisor,
and $s_i |_\Xtilde$ is nothing but the pullback of $(s_i)_{\Bl_{Z_i} \bP^n}$.
Hence
$E_i$ is the total transform of the exceptional divisor of $\Bl_{Z_i} \bP^n$.
In particular, $E_i = \ptilde^{-1}(Z_i) $ holds.

\begin{lemma}\label{lm:exc_div}
\begin{enumerate}
\item The restriction of $\ptilde$ to
$
 \Xtilde \setminus \bigcup_{i=1}^r E_i
$
is an isomorphism onto
$
 \bP^n \setminus \bigcup_{i=1}^r Z_i.
$
\item For each $i = 1, \ldots, r$,
the divisor $E_i$ is irreducible.
\item One has
$
 \bsqtilde(E_1) = \bP^{m_1} \times \overline{(\varphi_2,\dots,\varphi_r)(Z_1)},
$
and similarly for $\bsqtilde(E_i)$ for $i = 2, \ldots, r$.
\end{enumerate}
\end{lemma}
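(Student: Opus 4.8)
The plan is to prove the three assertions separately; item~(2) is the only one that requires real work, and items~(1) and~(3) will follow from it together with \pref{lm:fiber}.

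\textbf{(1)} Since $E_i = \ptilde^{-1}(Z_i)$, we have $\Xtilde \setminus \bigcup_{i=1}^r E_i = \ptilde^{-1}(U)$, where $U \coloneqq \bP^n \setminus \bigcup_{i=1}^r Z_i$. By \pref{lm:fiber}, for every $z \in U$ the fiber $\ptilde^{-1}(z)$ consists of the single point $(z, \varphi_1(z), \dots, \varphi_r(z))$; hence the morphism $z \mapsto (z, \varphi_1(z), \dots, \varphi_r(z))$ from $U$ to $\ptilde^{-1}(U)$ is inverse to $\ptilde$, so $\ptilde$ restricts to an isomorphism $\ptilde^{-1}(U) \to U$.

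\textbf{(2)} Fix $i$ and put $Z_i^\circ \coloneqq Z_i \setminus \bigcup_{j \ne i} Z_j$. Since $\bss$ is general, the subspaces $\ker s_j \subset V$ are in general position; in particular $\dim(Z_i \cap Z_j) < \dim Z_i$ for $j \ne i$, so $Z_i^\circ$ is a dense open subset of the irreducible variety $Z_i$. I would first describe $\ptilde^{-1}(Z_i^\circ)$ explicitly: for $z \in Z_i^\circ$ a representing vector lies in $\ker s_i$ but in no other $\ker s_j$, so over $p^{-1}(Z_i^\circ) = Z_i^\circ \times \bP^\bsm$ the $i$-th component of $\bss$ vanishes identically while the $j$-th component $(j \ne i)$ cuts out the graph of $\varphi_j$; consequently, by \pref{lm:Zs=X}, the assignment $(z, y) \mapsto (z; y_1, \dots, y_r)$ with $y_i = y$ and $y_j = \varphi_j(z)$ for $j \ne i$ defines an isomorphism $Z_i^\circ \times \bP^{m_i} \to \ptilde^{-1}(Z_i^\circ)$ (the inverse being $\ptilde$ together with the $i$-th $\bP^{m_i}$-coordinate). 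Thus $\ptilde^{-1}(Z_i^\circ)$ is irreducible of dimension $n-1$ and open in $E_i$. Next I would show that this open set is dense in $E_i$, i.e.\ that $\dim \ptilde^{-1}(Z_i \setminus Z_i^\circ) \le n-2$: writing $Z_i \setminus Z_i^\circ = \bigcup_{j \ne i}(Z_i \cap Z_j)$ and stratifying each $Z_i \cap Z_j$ by the set $J \ni i,j$ of indices $k$ with $z \in Z_k$, the corresponding stratum is contained in $\bP(\bigcap_{k \in J} \ker s_k)$, which has dimension at most $n - |J| - \sum_{k \in J} m_k$ by general position, while by \pref{lm:fiber} the fibers of $\ptilde$ over it have dimension $\sum_{k \in J} m_k$; hence the preimage of each stratum has dimension at most $n - |J| \le n-2$, and there are only finitely many strata. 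Finally, $E_i = \ptilde^{-1}(Z_i)$ is a Cartier divisor on the smooth variety $\Xtilde$ that is nonempty (as $Z_i \ne \emptyset$) and not all of $\Xtilde$ (as $\ptilde$ is surjective and $Z_i \ne \bP^n$), hence of pure dimension $n-1$; every irreducible component therefore meets the dense irreducible open subset $\ptilde^{-1}(Z_i^\circ)$, so $E_i$ is irreducible.

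\textbf{(3)} By the description in~(2), $\ptilde^{-1}(Z_1^\circ) \cong Z_1^\circ \times \bP^{m_1}$ is a dense open subset of $E_1$ on which $\bsqtilde$ is the morphism $(z, y) \mapsto (y, \varphi_2(z), \dots, \varphi_r(z))$. As $E_1$ is projective, $\bsqtilde(E_1)$ is closed and coincides with the closure of $\bsqtilde(\ptilde^{-1}(Z_1^\circ)) = \bP^{m_1} \times (\varphi_2, \dots, \varphi_r)(Z_1^\circ)$; since $Z_1^\circ$ is dense in $Z_1$ and lies in the domain of the rational map $(\varphi_2, \dots, \varphi_r)|_{Z_1}$, this closure equals $\bP^{m_1} \times \overline{(\varphi_2, \dots, \varphi_r)(Z_1)}$, and the same argument with the indices permuted handles $\bsqtilde(E_i)$ for $i = 2, \dots, r$. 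The one genuinely substantive point in the whole argument is the density estimate in~(2), where both \pref{lm:fiber} and the general position of the focal loci are really used; once $\Xtilde$ over $Z_i^\circ$ has been identified, parts~(1) and~(3) are formal.
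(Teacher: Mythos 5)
Your proposal is correct and follows essentially the same route as the paper: identify $\ptilde^{-1}(Z_i\setminus\bigcup_{j\ne i}Z_j)\cong (Z_i\setminus\bigcup_{j\ne i}Z_j)\times\bP^{m_i}$ via \pref{lm:fiber}, bound the dimension of the preimage of the overlap locus, and invoke purity of the Cartier divisor $E_i$ on the smooth $\Xtilde$ to conclude irreducibility, with (1) and (3) following formally. The only difference is that you spell out the stratification giving $\dim\ptilde^{-1}(Z_i\cap\bigcup_{j\ne i}Z_j)\le n-2$, which the paper leaves as an immediate consequence of \pref{lm:fiber}.
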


\begin{proof}
(1) holds since the rational map $ \bsvarphi$ is defined on
 $
 \bP^n \setminus \bigcup_{i=1}^r Z_i
$
and $\Xtilde$ is the graph of $ \bsvarphi$.
\\
(2)
 It 
follows from \pref{lm:fiber} that
$\ptilde^{-1}(Z_i \setminus \bigcup_{j \neq i} Z_j)$ is irreducible of dimension $n-1$
and
$
 \dim \ptilde^{-1}(Z_i \cap \bigcup_{j \neq i} Z_j) < n-1.
$
Since $E_i$ is a Cartier divisor,
all irreducible components are $n-1$-dimensional,
and hence $E_i$ is irreducible.
\\
(3) By \pref{lm:fiber}
  and \pref{lm:Zs=X},
  $\ptilde^{-1}(z) = \Xtilde \cap p^{-1}(z)$ coincides with
  \[
   \lc z \rc \times \bP^{m_1} \times \varphi_2(z)\times  \dots \times \varphi_r(z) 
  \]
  for all $z\in Z_1 \setminus\bigcup_{j \neq 1} Z_j$. Hence
the image of $\ptilde^{-1}(
Z_1
\setminus
\bigcup_{j \neq 1} Z_j
)$ by $ \bsqtilde$ is $ \bP^{m_1} \times (\varphi_2,\dots,\varphi_r)(
Z_1
\setminus
\bigcup_{j \neq 1} Z_j
) $.
Since $\ptilde^{-1}(
Z_1
\setminus
\bigcup_{j \neq 1} Z_j
)$ is dense in $E_1$,
we have (3).
\end{proof}



From now on, we assume $|\bsm| \geq n+1$.
\pref{lm:small}
implies that
$\bsqtilde(E_i)$ is a prime Weil divisor on $X$.
We set
\begin{align} \label{eq:L_i}
 L_i \coloneqq \varpi_i^* \cO_{\bP^{m_i}}(1) |_X
\end{align}
for $i = 1, \ldots, r$.

\begin{lemma}\label{lm:inverse}
The rational map defined by $\left| \cO_X(\bsqtilde(E_1)) \otimes  L_1 \right|$ is inverse
to the rational map $\bsvarphi \colon \bP^n \dto X$ up to $\Aut (\bP^n)$.
\end{lemma}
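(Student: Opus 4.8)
The plan is to pull the linear system back to the graph $\Xtilde$ along the small birational morphism $\bsqtilde$ and to recognise it there as the linear system that defines the contraction $\ptilde$. The first step is to rewrite the sheaf as a line bundle upstairs: by the very construction of $E_1$ as the zero locus of a section of $\ptilde^*\cO_{\bP^n}(1)\otimes\qtilde_1^*\cO_{\bP^{m_1}}(-1)$ on the smooth variety $\Xtilde$, one has $\cO_\Xtilde(E_1)\cong\ptilde^*\cO_{\bP^n}(1)\otimes\qtilde_1^*\cO_{\bP^{m_1}}(-1)$. Since $\qtilde_1=\varpi_1\circ\bsqtilde$ and $L_1=\varpi_1^*\cO_{\bP^{m_1}}(1)|_X$, we have $\qtilde_1^*\cO_{\bP^{m_1}}(1)\cong\bsqtilde^*L_1$, and tensoring gives
\[
 \cO_\Xtilde(E_1)\otimes\bsqtilde^*L_1\ \cong\ \ptilde^*\cO_{\bP^n}(1).
\]

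Next I would transfer global sections through $\bsqtilde$. As $\bsqtilde\colon\Xtilde\to X$ is small by \pref{lm:small} and both $X$ and $\Xtilde$ are normal (indeed $\Xtilde$ is smooth), there is an open $U\subseteq X$ with $\codim(X\setminus U)\ge 2$ over which $\bsqtilde$ restricts to an isomorphism $\Utilde\coloneqq\bsqtilde^{-1}(U)\xrightarrow{\ \sim\ }U$, and the estimates in the proof of \pref{lm:small} show that $\codim(\Xtilde\setminus\Utilde)\ge 2$ as well; in particular $E_1$ is not contracted, so $E_1\cap\Utilde$ is dense in $E_1$ and $\bsqtilde(E_1)$ is a prime Weil divisor restricting on $U$ to the image of $E_1\cap\Utilde$. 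Hence, under $\bsqtilde|_{\Utilde}$, the restriction of $\cO_X(\bsqtilde(E_1))\otimes L_1$ to $U$ corresponds to $(\cO_\Xtilde(E_1)\otimes\bsqtilde^*L_1)|_{\Utilde}=\ptilde^*\cO_{\bP^n}(1)|_{\Utilde}$. Since $\cO_X(\bsqtilde(E_1))\otimes L_1$ is reflexive on the normal $X$ and $\ptilde^*\cO_{\bP^n}(1)$ is a line bundle on the normal $\Xtilde$, sections extend across these codimension $\ge 2$ loci, yielding a canonical isomorphism
\[
 H^0\bigl(X,\cO_X(\bsqtilde(E_1))\otimes L_1\bigr)\ \cong\ H^0\bigl(\Xtilde,\ptilde^*\cO_{\bP^n}(1)\bigr)
\]
that carries $|\cO_X(\bsqtilde(E_1))\otimes L_1|$ to $|\ptilde^*\cO_{\bP^n}(1)|$ after restriction to $U\cong\Utilde$.

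Finally I would identify the right-hand side and conclude. Since $\ptilde$ is a proper birational morphism onto the normal variety $\bP^n$ we have $\ptilde_*\cO_\Xtilde=\cO_{\bP^n}$, so the projection formula gives $H^0(\Xtilde,\ptilde^*\cO_{\bP^n}(1))=H^0(\bP^n,\cO_{\bP^n}(1))=V^\dual$, and the associated linear system is $\ptilde^*|\cO_{\bP^n}(1)|$, which defines the morphism $\ptilde$. Combining this with the isomorphism above and restricting to $U$ shows that the rational map $X\dashrightarrow\bP^n$ attached to $|\cO_X(\bsqtilde(E_1))\otimes L_1|$, precomposed with $\bsqtilde$, equals $\ptilde$; therefore it equals $\ptilde\circ\bsqtilde^{-1}$, which by the first item of \pref{lm:exc_div} together with \pref{lm:small} is precisely the inverse of $\bsvarphi=\bsqtilde\circ\ptilde^{-1}$.

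The step I expect to require the most care is the transfer of sections: $\cO_X(\bsqtilde(E_1))\otimes L_1$ is a priori only a reflexive sheaf rather than a line bundle, so both the identification of its space of sections and the identification of the rational map it defines must be performed over the open set $U$ on which $\bsqtilde$ is an isomorphism. It is exactly the smallness of $\bsqtilde$ — guaranteeing that $E_1$ is not contracted and that $X\setminus U$ and $\Xtilde\setminus\Utilde$ have codimension at least two — together with normality of $X$ and $\Xtilde$ that makes this descent legitimate.
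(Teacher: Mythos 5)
Your proof is correct and follows essentially the same route as the paper: identify $\cO_{\Xtilde}(E_1)\otimes\qtilde_1^*\cO_{\bP^{m_1}}(1)\cong\ptilde^*\cO_{\bP^n}(1)$, transfer global sections across the small morphism $\bsqtilde$ using normality of $X$, and recognise the resulting linear system as the one defining $\ptilde$, hence $\bsvarphi^{-1}=\ptilde\circ\bsqtilde^{-1}$. The paper states the section-transfer step in one line; your careful justification via reflexivity and the codimension-two complements on both sides is exactly the content implicitly invoked there.
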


\begin{proof}
For a linear system $\Lambda$,
we let $\phi_{\Lambda}$ denote
the rational map defined by $\Lambda$.

Since $X$ is normal and $\bsqtilde$ is small,
we have 
\begin{align} \label{eq:1}
 H^0 \lb X,  \cO_X \lb \bsqtilde(E_1) \rb \otimes  L_1 \rb
  = H^0 \lb \Xtilde, \cO_{\Xtilde}(E_1) \otimes \qtilde_1^* \cO_{\bP^{m_1}}(1) \rb
\end{align}
and the rational map $\phi_{\left| \cO_X(\bsqtilde(E_1)) \otimes  L_1 \right|}$
coincides with the composite map $\phi_{\left| \cO_{\Xtilde}(E_1) \otimes \qtilde_1^* \cO_{\bP^{m_1}}(1)  \right| } \circ \bsqtilde^{-1}$.

Since $\cO_{\Xtilde}(E_1) \cong \ptilde^* \cO_{\bP^n}(1) 
\otimes \qtilde_1^*\cO_{\bP^{m_1}}(-1)$,
we have 
$\cO_{\Xtilde}(E_1) \otimes \qtilde_1^*\cO_{\bP^{m_1}}(1) \cong \ptilde^* \cO_{\bP^n}(1) $.
Thus the rational map $\phi_{\left| \cO_{\Xtilde}(E_1) \otimes \qtilde_1^* \cO_{\bP^{m_1}}(1)  \right| }$
coincides with $\phi_{|\ptilde^* \cO_{\bP^n}(1)| } =\ptilde$ up to $\Aut (\bP^n)$.
Hence  $\phi_{\left| \cO_X(\bsqtilde(E_1)) \otimes  L_1 \right|} =\phi_{\left| \cO_{\Xtilde}(E_1) \otimes \qtilde_1^* \cO_{\bP^{m_1}}(1)  \right| } \circ \bsqtilde^{-1}$ 
coincides with $\ptilde \circ \bsqtilde^{-1} = \bsvarphi^{-1}$ up to $\Aut (\bP^n)$.
\end{proof}

\pref{lm:inverse} shows that we can reconstruct $\bsvarphi$
up to $\Aut (\bP^n)$
from $X$ and $\bsqtilde(E_1)$.
\pref{lm:unique} below shows that $\bsqtilde(E_1)$ is uniquely determined by
$
 X \subset \bP^\bsm
$
if $ | \bsm | \ge n+2$ or $ | \bsm | =n+1$ and $m_1 \ge 2$.

\begin{lemma}\label{lm:unique}
Assume $ | \bsm | \ge n+2$ or $ | \bsm | =n+1$ and $m_1 \ge 2$.
Then $\bsqtilde(E_1)$ is the unique Weil divisor on $X$
of the form $\bP^{m_1} \times Y$
for some $Y \subset  \prod_{i \ne 1} \bP^{m_i}$.
\end{lemma}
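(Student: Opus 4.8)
The divisor $\bsqtilde(E_1)$ does have the required form, with $Y = \overline{(\varphi_2,\dots,\varphi_r)(Z_1)}$, by \pref{lm:exc_div}.(3); it is a prime Weil divisor by \pref{lm:small}, of dimension $n-1$ since $\dim Z_1 = n-m_1-1$ and $(\varphi_2,\dots,\varphi_r)|_{Z_1}$ is generically finite (because $\sum_{i\geq 2}m_i\geq n-m_1$, using $|\bsm|\geq n+1$). So given any prime Weil divisor $D = \bP^{m_1}\times Y$ on $X$ — necessarily with $\dim Y = n-1-m_1$ — the plan is to show $D = \bsqtilde(E_1)$, distinguishing two cases according to whether or not $D$ is contained in the union $\bigcup_{i=1}^{r}\bsqtilde(E_i)$ of the ``exceptional'' prime divisors.

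Suppose first $D\subseteq\bigcup_i\bsqtilde(E_i)$. Since $D$ and each $\bsqtilde(E_i)$ are irreducible of dimension $n-1$ (by \pref{lm:exc_div}.(2) and the generic finiteness just used), $D = \bsqtilde(E_i)$ for some $i$; if $i=1$ we are done, so I would assume $i\neq 1$ and derive a contradiction. Projecting the two descriptions $D = \bsqtilde(E_i) = \bP^{m_i}\times\overline{(\varphi_j)_{j\neq i}(Z_i)}$ and $D = \bP^{m_1}\times Y$ to $\prod_{j\neq 1}\bP^{m_j}$ forces $\overline{(\varphi_j)_{j\neq i}(Z_i)} = \bP^{m_1}\times Y''$ with $Y'' := \overline{(\varphi_j)_{j\neq 1,i}(Z_i)}$ and $\dim Y'' = n-1-m_1-m_i$. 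Reading off the fibre of $E_i$ over a general point of $Z_i$ from \pref{lm:fiber}, one sees that for general $y''\in Y''$ the linear subspace $Z_i\cap\bigcap_{j\neq 1,i}\varphi_j^{-1}(y''_j)\subseteq\bP^n$ must surject onto $\bP^{m_1}$ under $\varphi_1$, hence has dimension $\geq m_1$; equivalently, by \pref{lm:Zs=X} applied to the (still general) configuration $(\varphi_j|_{Z_i})_{j\neq 1,i}$ on $Z_i\cong\bP^{n-m_i-1}$, the point $y''$ lies in the degeneracy locus where the associated bundle map has rank $\leq n-m_i-1-m_1$. By \cite[Theorem 2.8]{Ottaviani} this locus has dimension $(|\bsm|-m_1-m_i)-(m_1+1)(|\bsm|-n+1)$, which is $< n-1-m_1-m_i = \dim Y''$ for every $m_1\geq 1$ (as $|\bsm|\geq n+1$) — a contradiction.

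Suppose next $D\not\subseteq\bigcup_i\bsqtilde(E_i)$. Then for general $y\in Y$ and general $w\in\bP^{m_1}$ the point $(w,y)\in D$ avoids $\bigcup_i\bsqtilde(E_i)$, so by \pref{lm:exc_div}.(1) it equals $\bsvarphi(v)$ for a unique $v\in\bP^n\setminus\bigcup_i Z_i$, which then lies in the linear subspace $\overline{(\varphi_2,\dots,\varphi_r)^{-1}(y)}\subseteq\bP^n$ and satisfies $\varphi_1(v)=w$. Letting $w$ vary (distinct $w$ give distinct $v$) shows that this linear subspace surjects onto $\bP^{m_1}$ under $\varphi_1$, so has dimension $\geq m_1$; by \pref{lm:Zs=X} for the configuration $(\varphi_2,\dots,\varphi_r)$ on $\bP^n$ this says $y$ lies in the $(n-m_1)$-th degeneracy locus $X'_{n-m_1}\subseteq\prod_{j\geq 2}\bP^{m_j}$, whence $Y\subseteq X'_{n-m_1}$. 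But by \cite[Theorem 2.8]{Ottaviani} this locus is empty or of dimension $(|\bsm|-m_1)-(m_1+1)(|\bsm|-n)$, which is $< n-1-m_1 = \dim Y$ precisely under the hypothesis that $|\bsm|\geq n+2$, or $|\bsm| = n+1$ and $m_1\geq 2$ — a contradiction. Therefore $D = \bsqtilde(E_1)$.

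I expect the delicate point to be the subcase $D = \bsqtilde(E_i)$ with $i\neq 1$: excluding the possibility that one of the other exceptional divisors accidentally has the product shape $\bP^{m_1}\times Y$ forces one to descend to the induced camera configuration on the focal locus $Z_i$ and rerun the degeneracy-locus dimension count there, which in turn requires checking that the genericity of $\bss$ is inherited by this sub-configuration so that \pref{lm:fiber}, \pref{lm:Zs=X} and \cite[Theorem 2.8]{Ottaviani} still apply.
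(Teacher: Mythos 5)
Your proof is correct, and it hinges on the same two ingredients as the paper's: the bound $\dim X^{\dagger}_{n-m_1} \le \sum_{i\ne 1}m_i-(m_1+1)(|\bsm|-n)$ from \cite[Theorem 2.8]{Ottaviani} and the resulting inequality $m_1(|\bsm|-n)\le 1$, which is exactly what the hypothesis rules out. The difference is organizational. The paper runs a single uniform argument: for any prime divisor $D=\bP^{m_1}\times Y$ with $D\ne\bsqtilde(E_1)$, the strict transform of $D$ in $\Xtilde$ is not $E_1$; since $E_1$ is the \emph{unique} divisor contracted by $\tilde{\pi}\colon\Xtilde\to\Xtilde^{\dagger}$ and $\bsqtilde$ is small, $D$ descends to a divisor $\Dtilde^{\dagger}\subset\Xtilde^{\dagger}$ with $\bsqtilde^{\dagger}(\Dtilde^{\dagger})=\pi(D)=Y$, and the fiber-dimension jump $\dim\Dtilde^{\dagger}-\dim Y=m_1$ forces $Y\subseteq X^{\dagger}_{n-m_1}$ via \pref{eq:fiber_q} and \pref{eq:q_*s}. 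This single step subsumes your Case 1 (the possibility $D=\bsqtilde(E_i)$ for $i\ne 1$) with no extra work, whereas you exclude it by a separate degeneracy-locus computation for the camera configuration restricted to the focal locus $Z_i$ --- a computation that is valid (the genericity you worry about does hold, since restriction $\Hom(V,W_j)\to\Hom(\ker s_i,W_j)$ is surjective, so the restricted section is again general), and which even works without the hypothesis on $\bsm$, but is strictly more delicate than needed. Your Case 2 is essentially the paper's argument specialized to divisors not contained in $\bigcup_i\bsqtilde(E_i)$. In short: what the strict-transform observation buys is the elimination of your case split; what your route buys is that it never needs to identify which divisors $\tilde{\pi}$ contracts, only where $\bsvarphi$ is defined.
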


\begin{proof}
By \pref{lm:exc_div}.(3), $\bsqtilde(E_1)$ is a Weil divisor of such form.

Assume there exists a subvariety $Y  \subset  \prod_{i \ne 1} \bP^{m_i}$ of dimension $n-1-m_1$
such that 
$D=\bP^{m_1} \times Y $ is contained in $ X$ and $D \ne \bsqtilde(E_1)$.
Let
$
 \Xtilde^{\dagger}  \subset \bP^n \times \prod_{i \ne 1} \bP^{m_i}
$
and $X^{\dagger} \subset \prod_{i \ne 1} \bP^{m_i}$
be the subvarieties obtained from $(s_2,\dots,s_r)$
in the same way as $\Xtilde$ and $X$.
Consider the diagram 
\begin{align}
\begin{aligned}
\xymatrix{
   \Xtilde \ar[d]_{\tilde{\pi}} \ar[r]^{\bsqtilde} &  X\ar[d]_{\pi}   \\ 
   \Xtilde^{\dagger}
	 \ar[r]^{\bsqtilde^{\dagger}} &  X^{\dagger}  ,\\
} 
\end{aligned}
\end{align}
 where $\pi,\tilde{\pi}, \bsqtilde^{\dagger}$ are induced by the natural projections.
\pref{lm:exc_div} shows that
$E_1$ is the unique divisor contracted by $\tilde{\pi}$.
Since $\bsqtilde$ is small and $D \ne \bsqtilde(E_1)$,
we have 
a divisor $\Dtilde^{\dagger} \subset \Xtilde^{\dagger}$ , which is the strict transform of $D$
by the birational map $ \tilde{\pi} \circ \bsqtilde^{-1}$.
Since $D=\bP^{m_1} \times Y$,
one has
$\pi(D)=Y$
and hence
$
 \bsqtilde^{\dagger} \lb \Dtilde^{\dagger} \rb = Y.
$

Since $\dim \Dtilde^{\dagger} -\dim Y =m_1$,
it follows from \pref{eq:fiber_q} and \pref{eq:q_*s} for $ \bsqtilde^{\dagger}, \Xtilde^{\dagger} ,X^{\dagger}$
that $Y$ must be contained in $X^{\dagger}_{n-m_1}$,
where we define the degeneracy locus $X^{\dagger}_{n-m_1} $ in the same way as $X_j$.
On the other hand,
the dimension of $X^{\dagger}_{n-m_1}$ is at most
\begin{align}
 \sum_{i \ne 1} m_i - \left(n+1-(n-m_1) \right) \left(\sum_{i \ne 1} m_i - (n-m_1) \right) =\sum_{i \ne 1} m_i  - (m_1+1)( | \bsm |-n). 
\end{align}
Hence we have
\begin{align}
 n-1-m_1 =\dim Y \le \dim X^{\dagger}_{n-m_1} \le \sum_{i \ne 1} m_i  - (m_1+1)( | \bsm |-n),
\end{align}
which implies $m_1( | \bsm |-n) \le 1$.
This contradicts the assumption $ | \bsm | \ge n+2$ or $| \bsm |=n+1$ and $m_1 \ge 2$.
\end{proof}

\begin{proof}[Proof of \pref{th:main}.(\pref{it:general})]
Let $X$ be the multiview variety {\black for} general $\bsvarphi$.
To show that $X \subset \bP^\bsm$
determines the rational map $\bsvarphi$
uniquely up to the action of
$
 \Aut(\bP^n) \cong \PGL(n+1, \bfk),
$
{\black it suffices} to see that the inverse $\bsvarphi^{-1}$ is uniquely determined
by $X \subset \bP^\bsm$ up to $ \PGL(n+1, \bfk)$.

Assume $|\bsm | \geq n+1$ and $\bsm \ne (1^{n+1}) \coloneqq (1, \ldots, 1)$.
Relabeling the indexes of $m_i$ if necessary,
we may assume that $ | \bsm | \ge n+2$ or $ | \bsm | =n+1$ and $m_1 \ge 2$.
Then \pref{lm:unique} states that
$X \subset \bP^\bsm$ uniquely determines
$\bsqtilde(E_1) \subset X$
without using $\bsvarphi$, $\bsqtilde$, etc.
Hence $X \subset \bP^\bsm$ uniquely determines
${\bsvarphi}^{-1}$
by \pref{lm:inverse}.
The inevitable ambiguity by the action of $ \PGL(n+1, \bfk)$ comes from
the identification of $\phi_{\left| \cO_{\Xtilde}(E_1) \otimes \qtilde_1^* \cO_{\bP^{m_1}}(1)  \right| }$ with $\phi_{|\ptilde^* \cO_{\bP^n}(1)| } =\ptilde$.
\end{proof}

\section{Projective reconstruction in the case {$\pmb{m}=(1^{n+1})$}}
 \label{sc:pr2}

Assume $r=n+1$ and $m_i=1$ for any $1 \le i \le n+1$.
Note that $\bP(W_i)$ can be canonically identified
with $\bP(W_i^\dual)$
since $\dim W_i =2$.
Set
\begin{align}
 V^{\prime} \coloneqq (\coker \bss)^\dual,
\end{align}
which is $(n+1)$-dimensional since
$
 \bss = (s_1,\dots,s_{n+1}) \colon V \to \bigoplus_{i=1}^{n+1} W_i
$
is general.
The canonical inclusion
\begin{align}\label{eq_def_s'}
 \bss^{\prime} \colon V^{\prime} \to \lb \bigoplus_{i=1}^{n+1} W_i \rb^\dual
  = \bigoplus_{i=1}^{n+1} W_i^\dual
\end{align}
defines a hypersurface 
\begin{align}
 X^{\prime} \subset \prod_{i=1}^{n+1} \bP(W_i^\dual) =(\bP^1)^{n+1}
\end{align}
in the same way as $X$.
We also define $\Xtilde^{\prime}, E^{\prime}_i,  \bsqtilde^{\prime}$, etc.\ in the same way as $X$.

\begin{lemma} \label{lm:X=X'}
The hypersurfaces $X$ and $X^{\prime}$ coincide
under the canonical identifications $\bP(W_i) = \bP(W_i^\dual)$
for $i = 1, \ldots, n+1$.
\end{lemma}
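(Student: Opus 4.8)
The plan is to compute both $X$ and $X'$ explicitly as hypersurfaces in $(\bP^1)^{n+1}$ and check that the two multilinear forms defining them agree up to scalar. Recall that $X = X_n$ is the $n$-th degeneracy locus of the bundle map $\bsq_*\bss\colon V\otimes\cO \to \bigoplus_i \varpi_i^*T_{\bP^1}(-1)$ on $(\bP^1)^{n+1}$. Since each $T_{\bP^1}(-1)\cong\cO_{\bP^1}(1)$, this is a map $V\otimes\cO \to \bigoplus_i\varpi_i^*\cO_{\bP^1}(1)$ between bundles of the same rank $n+1$, so $X$ is the vanishing locus of its determinant, a section of $\bigotimes_{i=1}^{n+1}\varpi_i^*\cO_{\bP^1}(1)$, i.e.\ a multilinear form of multidegree $(1,\dots,1)$. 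Concretely, if we pick a basis of $V$ and write $s_i$ as a $2\times(n+1)$ matrix $S_i$ with rows indexed by a basis of $W_i$, then on the chart where the homogeneous coordinates on the $i$-th $\bP^1$ are $(y_{i,0}:y_{i,1})$, the determinant in question is $\det\!\big(\,^t y_1 S_1;\ \dots;\ ^t y_{n+1} S_{n+1}\,\big)$ up to sign, where $^t y_i S_i$ denotes the $1\times(n+1)$ row $y_{i,0}(S_i)_{0,\bullet}+y_{i,1}(S_i)_{1,\bullet}$. Thus $X$ is cut out by a genuine $(n+1)$-linear form $F(y_1,\dots,y_{n+1})$ whose coefficients are, up to sign, the maximal minors of the $\big(\sum m_i+1\big)\times(n+1) = (2(n+1))\times(n+1)$-no, rather the $(n+1)\times(n+1)$ minors obtained by choosing one row from each block $S_i$.

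Next I would identify those coefficients with Plücker-type data. The $(n+1)$-dimensional image $\bss(V)\subset\bigoplus W_i$ determines a point of $\bigwedge^{n+1}\bigoplus_i W_i$, and by \pref{eq:line} its component in $\bigotimes_{i=1}^{n+1}\bigwedge^{1}W_i = \bigotimes_i W_i$ is precisely the tensor whose entries are these minors (this is the Grassmann tensor of profile $(1^{n+1})$, the quadrifocal-type tensor). Call this tensor $A\in\bigotimes_i W_i$; then $F(y_1,\dots,y_{n+1}) = \sum A^{\sigma_1,\dots,\sigma_{n+1}} y_{1,\sigma_1}\cdots y_{n+1,\sigma_{n+1}}$ up to scalar, so $X = \{A \text{ contracted against } (y_1,\dots,y_{n+1}) = 0\}$. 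Now I do the same for $X'$: by construction $\bss'(V') = \bss(V)^{\perp}\subset(\bigoplus_i W_i)^\dual = \bigoplus_i W_i^\dual$ is the orthogonal complement. The standard fact about Plücker/Grassmann duality is that the $(n+1)$-vector $\bigwedge^{n+1}\bss'(V')\in\bigwedge^{n+1}\bigoplus_i W_i^\dual$ is, up to a nonzero scalar, the Hodge dual of $\bigwedge^{n+1}\bss(V)$ under the pairing $\bigwedge^{n+1}\bigoplus W_i \otimes \bigwedge^{n+1}\bigoplus W_i^\dual \to \bigwedge^{2(n+1)}\bigoplus W_i \cong\bfk$; componentwise, since each $W_i$ is $2$-dimensional, $\bigwedge^{1}W_i^\dual = W_i^\dual \cong W_i\otimes\bigwedge^2 W_i^\dual$, and the $(1^{n+1})$-component of $\bigwedge^{n+1}\bss'(V')$ is the image of $A$ under $\bigotimes_i W_i \to \bigotimes_i W_i^\dual$ induced by the canonical isomorphisms $W_i\cong W_i^\dual\otimes\bigwedge^2 W_i$. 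But this is exactly the identification $\bP(W_i)=\bP(W_i^\dual)$ used in the statement. Hence the multilinear form $F'$ defining $X'$ equals $F$ (up to scalar) after the substitution $y_i \mapsto y_i$ via $\bP(W_i)=\bP(W_i^\dual)$, so $X = X'$ as subsets of $(\bP^1)^{n+1}$.

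The main obstacle is the bookkeeping in the penultimate step: making the "Grassmann duality is Hodge duality" statement precise at the level of the single relevant graded piece, and checking that the sign/scalar ambiguities really are harmless (they are, since we work in $\bP$ of the tensor space) and that the resulting identification of tensors is literally the one coming from the canonical $\dim W_i = 2$ isomorphism $\bP(W_i)\cong\bP(W_i^\dual)$ rather than some twist of it. One clean way to sidestep the abstract exterior-algebra computation is to argue directly with coordinates: choose $V\subset\bigoplus_i W_i$ spanned by $n+1$ explicit vectors, compute $A$ as the list of $(n+1)\times(n+1)$ minors, then note that $V' = V^\perp$ is cut out by those same rows viewed as linear functionals, compute $A'$ as the corresponding minors of the complementary system, and invoke the classical identity expressing the minors of a matrix in terms of the complementary minors of a basis of its kernel (a Laplace-expansion/cofactor statement) to see $A' = A$ after dualizing each $W_i$. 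I would then remark that Lemma \ref{lm:X=X'} reduces the $\bsm = (1^{n+1})$ case to analyzing the two orbits $\bss(V)$ and $\bss'(V')$ — the source of the "two candidates" in \pref{th:main}.(\pref{it:tolines}).
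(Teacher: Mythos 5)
Your strategy is sound and would yield a correct proof, but it is genuinely different from the paper's. You compute $X$ as the zero locus of the determinant of $\bsq_*\bss\colon V\otimes\cO\to\bigoplus_i\varpi_i^*T_{\bP(W_i)}(-1)$ (a square matrix since $|\bsm|=n+1$), identify its coefficients with the profile-$(1^{n+1})$ Grassmann tensor of $\bss(V)$, and then invoke the complementary-minor description of the Pl\"ucker coordinates of $\bss(V)^{\perp}=\bss'(V')$. The paper instead never takes a determinant: it splices the column of Euler sequences into the row $0\to V\otimes\cO\to(\bigoplus_i W_i)\otimes\cO\to (V')^{\dual}\otimes\cO\to 0$ and observes that $(\bsq_*\bss)_x$ fails to be injective if and only if $\bss(V)\cap\bigoplus_i\varpi_i^*\cO_{\bP(W_i)}(-1)\otimes k(x)\neq\{0\}$, which by the same diagram is exactly the degeneracy condition defining $X'$ once one notes $T_{\bP(W_i^\dual)}(-1)\cong\cO_{\bP(W_i)}(1)$. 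The practical difference is precisely the step you flag as your ``main obstacle'': in your route the per-factor signs in the Hodge/complementary-minor identity are \emph{not} automatically harmless --- projectivizing the tensor space only kills one overall scalar, not a sign flip within a single index of a single $W_i$ --- so you genuinely must verify that the canonical isomorphism $W_i\cong W_i^\dual\otimes\bigwedge^2W_i$ produces exactly the complementary-index matching, which is doable but is the whole content of the lemma in that formulation. The paper's rank/kernel argument sidesteps this bookkeeping entirely, since ``not injective'' is insensitive to all choices of trivialization; that is what the diagram-chase buys. If you carry out your computation, you should either do the coordinate Laplace-expansion check you sketch at the end, or upgrade the middle of your argument to the kernel-intersection statement, at which point you have essentially rediscovered the paper's proof.
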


\begin{proof}
On $\prod_{i=1}^{n+1} \bP(W_i)$,
we have a diagram
\begin{equation}\label{eq:diag_s_s'}
\begin{aligned}
  \xymatrix{    &  & 0 \ar[d]
    \\
    &  & \bigoplus_{i=1}^{n+1} \varpi_i^{*}\cO_{\bP(W_i)}(-1) \ar[rd] \ar[d]
    \\
    0 \ar[r] & V \otimes \cO \ar[r]^(.4){\bss}  \ar[rd]
    & (\bigoplus_{i=1}^{n+1} W_i) \otimes \cO  \ar[r]^(.5){(\bss^{\prime})^\dual} \ar[d] & (V^{\prime})^{\vee } \otimes \cO  \ar[r]  & 0.
    \\
     &    & \bigoplus_{i=1}^{n+1} \varpi_i^* T_{\bP(W_i)}(-1)\ar[d] & & 
    \\
    & & 0
  }
\end{aligned}
\end{equation}
A point
$
 x \in  \prod_{i=1}^{n+1} \bP(W_i)
$
is contained in $X=X_n$ if and only if the rank of
the linear map 
\begin{align}
 \bsq_* \bss \otimes k(x) \colon V  \to  \bigoplus_{i=1}^{n+1} \varpi_i^* T_{\bP(W_i)}(-1) \otimes k(x)
\end{align}
is at most $n$,
that is,
$\bsq_* \bss \otimes k(x)$ is not injective.
By \pref{eq:diag_s_s'},
this is equivalent to the condition
that 
\begin{align}
 \bss(V) \cap  \bigoplus_{i=1}^{n+1} \varpi_i^{*}\cO_{\bP(W_i)}(-1) \otimes k(x) \ne \{0\},
\end{align}
where we take the intersection as subspaces of $\bigoplus_{i=1}^{n+1} W_i$.
By \pref{eq:diag_s_s'} again,
this is equivalent to the condition that
the rank of the linear map
\begin{align}\label{eq:X'}
 \bigoplus_{i=1}^{n+1} \varpi_i^{*}\cO_{\bP(W_i)}(-1) \otimes k(x) \to (V^{\prime})^\dual
\end{align}
is at most $n$.
Under the identification $\bP(W_i^\dual)=\bP(W_i)$,
the sheaf $T_{\bP(W_i^\dual)}(-1)$  is identified with $ \cO_{\bP(W_i)}(1) $.
Hence the rank of the linear map \pref{eq:X'} is at most $n$ if and only if $x$ is contained in $X^{\prime}$.
Thus $X=X^{\prime}$ holds.
\end{proof}

Recall from \pref{lm:exc_div}
that
$
 \bsqtilde(E_1) = \bP(W_1) \times \overline{(\varphi_2,\dots,\varphi_r)(Z_1)}.
$
Similarly, one has
$
 \bsqtilde^{\prime}(E^{\prime}_1)
  = \bP(W_1^\dual) \times \overline{(\varphi^{\prime}_2,\dots,\varphi^{\prime}_r)(Z^{\prime}_1)}.
$

\begin{lemma}\label{lm:q'(E'_1)}
The closure $\overline{(\varphi_2,\dots,\varphi_r)(Z_1)} \subset \prod_{i=2}^{n+1} \bP(W_i)$
is the $(n-2)$-th degeneracy locus of the composite map 
\begin{align}\label{eq:in_lem_q'(E'_1)}
 (\ker s_1) \otimes \cO_{ \prod_{i=2}^{n+1} \bP(W_i) }
  \hookrightarrow  V \otimes \cO_{ \prod_{i=2}^{n+1} \bP(W_i) } 
  \xto{(s_2,\dots,s_{n+1}) } \bigoplus_{i=2}^{n+1} \varpi_i^* T_{\bP(W_i)}(-1),
\end{align}
where 
we use the same letter 
$\varpi_i$
for
the projection $ \prod_{i=2}^{n+1} \bP(W_i) \rightarrow \bP(W_i)$.
On the other hand,
the closure
$
 \overline{(\varphi^{\prime}_2,\dots,\varphi^{\prime}_r)(Z^{\prime}_1)}
  \subset \prod_{i=2}^{n+1} \bP(W_i^\dual) = \prod_{i=2}^{n+1} \bP(W_i)
$
is the $(n-1)$-th degeneracy locus of
\begin{align}\label{eq:2_in_lem_q'(E'_1)}
(s_2,\dots,s_{n+1})  \colon  V \otimes \cO_{ \prod_{i=2}^{n+1} \bP(W_i) }  \to \bigoplus_{i=2}^{n+1} \varpi_i^* T_{\bP(W_i)}(-1).
\end{align}
\end{lemma}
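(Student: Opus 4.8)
The plan is to deduce both equalities from the principle already established in this section — that, for a general camera configuration, the closure of the image of the multiview map coincides with the top degeneracy locus of the pushforward of the section cutting out its graph closure; this is the substance of \pref{lm:fiber}, \pref{lm:Zs=X} and the identity \pref{eq:X=X_n}. The first equality will be a direct application, and the second will be obtained by combining this principle with an annihilator duality of the same kind as in the proof of \pref{lm:X=X'}.

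For the first equality, observe that $(s_2|_{\ker s_1}, \dots, s_{n+1}|_{\ker s_1})$ is a camera configuration on $\bP(\ker s_1) = Z_1 \cong \bP^{n-2}$. Since restriction of a linear map $V \to W_i$ to $\ker s_1$ gives a surjection onto the space of linear maps $\ker s_1 \to W_i$, this configuration is general whenever $\bss$ is, so each $s_i|_{\ker s_1}$ is surjective and the focal loci are in general position. The composite map in \pref{eq:in_lem_q'(E'_1)} is exactly the pushforward by $\bsq$ of the section cutting out the graph closure of $(\varphi_2, \dots, \varphi_{n+1})|_{Z_1}$, so the reasoning behind \pref{lm:fiber}, \pref{lm:Zs=X} and \pref{eq:X=X_n}, applied to this configuration, identifies its $(n-2)$-th degeneracy locus with the multiview variety of the restricted configuration, which is $\overline{(\varphi_2, \dots, \varphi_{n+1})(Z_1)}$. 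The only configuration used here maps $\bP^{n-2}$ to $(\bP^1)^n$, so the running hypothesis $\dim V - 1 > m_i$ is met unless $n = 3$, a case that one checks by hand inside $(\bP^1)^3$.

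For the second equality, write $\psi$ for the map in \pref{eq:2_in_lem_q'(E'_1)}; for a point $x = (x_2, \dots, x_{n+1})$ let $L_{x_i} \subset W_i$ be the corresponding line and set $N_x \coloneqq W_1 \oplus \bigoplus_{i \geq 2} L_{x_i} \subset \bigoplus_{i=1}^{n+1} W_i$. Then $\ker \psi_x \cong \bss(V) \cap N_x$, and since $\dim \bss(V) + \dim N_x = (n+1) + (n+2)$ exceeds $\dim \bigoplus_i W_i = 2(n+1)$, the point $x$ lies in the $(n-1)$-th degeneracy locus of $\psi$ if and only if $\dim(\bss(V) \cap N_x) \geq 2$, equivalently $\bss(V) + N_x \ne \bigoplus_i W_i$, equivalently $\bss(V)^\perp \cap N_x^\perp \ne \{0\}$. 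Now $\bss(V)^\perp = \bss'(V')$ by the very definition of $\bss'$, while $N_x^\perp = \{0\} \oplus \bigoplus_{i \geq 2} L_{x_i}^\perp$ with $L_{x_i}^\perp \subset W_i^\dual$ the annihilator of $L_{x_i}$; hence a nonzero element of $\bss(V)^\perp \cap N_x^\perp$ is $\bss'(\xi)$ for some $0 \ne \xi \in \ker s'_1$ with $s'_i(\xi) \in L_{x_i}^\perp$ for all $i \geq 2$. Under $\bP(W_i^\dual) = \bP(W_i)$ the line $L_{x_i}^\perp$ is the point $x_i$, so this condition says precisely $x_i = \varphi'_i([\xi])$ whenever $s'_i(\xi) \ne 0$. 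Thus the $(n-1)$-th degeneracy locus of $\psi$ is the union over $[\xi] \in Z'_1$ of the loci $\{x \mid x_i = \varphi'_i([\xi]) \text{ whenever } s'_i(\xi) \ne 0\}$, which is $\overline{(\varphi'_2, \dots, \varphi'_{n+1})(Z'_1)}$ by the argument of the first equality applied now to $(s'_2|_{\ker s'_1}, \dots, s'_{n+1}|_{\ker s'_1})$ on $Z'_1 \cong \bP^{n-2}$. One can also finish without reusing that argument: the description above already gives that $\overline{(\varphi'_2, \dots, \varphi'_{n+1})(Z'_1)}$ is contained in the $(n-1)$-th degeneracy locus of $\psi$, the latter has pure codimension $(n+1-(n-1))(n-(n-1)) = 2$ by \cite[Theorem 2.8]{Ottaviani} since $(s_2, \dots, s_{n+1})$ is a general section, and the loci where the witnessing $\xi$ meets some $Z'_i$ or where the rank of $\psi$ drops further both have dimension $< n-2$, so the image of $Z'_1$ is dense in every component.

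The main obstacle will be the second equality: one has to carry the rank condition for the $(n+1)$-dimensional-source map $\psi$ through the annihilator isomorphism $\bss(V)^\perp = \bss'(V')$ and see that it turns into the rank condition of a map with $(n-1)$-dimensional source — the profile shifting by one, which the dimension count above keeps track of — and one has to check that the primed configuration inherits enough genericity from $\bss$, in particular that the focal loci $Z'_i$ are in general position; this follows because the images of the maps $W_i \to \coker \bss$ are in general position when $\bss$ is general.
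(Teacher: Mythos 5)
Your proposal is correct and follows essentially the same route as the paper: the first equality is the identity $X=X_n$ applied to the restricted configuration on $Z_1=\bP(\ker s_1)$, and the second rests on the annihilator duality between $\bss(V)$ and $\bss^{\prime}(V^{\prime})$, which the paper implements as a sheaf-level diagram chase (as in \pref{lm:X=X'}, relative to the sequence $0\to\ker s_1\to\bigoplus_{i\ge 2}W_i\to (V^{\prime})^{\vee}\to 0$) and you implement as the pointwise computation $\ker\psi_x\cong \bss(V)\cap N_x$ together with $\bss(V)^{\perp}=\bss^{\prime}(V^{\prime})$. The differences — your explicit pointwise bookkeeping, the optional dimension-count finish, and the caveats about genericity of the restricted and primed configurations — are presentational rather than substantive.
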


\begin{proof}
The first statement follows by applying \pref{eq:X=X_n} to
$
 (\varphi_2,\dots,\varphi_r) |_{Z_1} \colon Z_1=\bP(\ker s_1) \dto  \prod_{i=2}^{n+1} \bP(W_i).
$
Since $s_1 \colon V \to W_1$ is surjective,
we have an exact sequence 
\begin{align}
 0 \to \ker s_1 \to \bigoplus_{i=2}^{n+1} W_i \to  (V^{\prime})^{\vee } \to 0,
\end{align}
which gives a diagram
\begin{equation}\label{diag_s_s'_2}
\begin{aligned}
  \xymatrix{    &  & 0 \ar[d]
    \\
    &  & \bigoplus_{i=2}^{n+1} \varpi_i^{*}\cO_{\bP(W_i)}(-1) \ar[rd] \ar[d]
    \\
    0 \ar[r] &( \ker s_1 ) \otimes \cO \ar[r]  \ar[rd]
    & (\bigoplus_{i=2}^{n+1} W_i) \otimes \cO  \ar[r] \ar[d] & (V^{\prime})^{\vee } \otimes \cO  \ar[r]  & 0
    \\
     &    & \bigoplus_{i=2}^{n+1} \varpi_i^* T_{\bP(W_i)}(-1)\ar[d] & & 
    \\
    & & 0
  }
\end{aligned}
\end{equation}
on $\prod_{i=2}^{n+1} \bP(W_i)$.
By an argument similar to that in the proof of \pref{lm:X=X'},
we see that  the $(n-2)$-th degeneracy locus of \pref{eq:in_lem_q'(E'_1)} coincides with
the $(n-1)$-th degeneracy locus of
$
 \bigoplus_{i=2}^{n+1} \varpi_i^{*}\cO_{\bP(W_i)}(-1)  \to   (V^{\prime})^{\vee } \otimes \cO,
$
that is,
the $(n-1)$-th degeneracy locus of
$
 V^{\prime}\otimes \cO \to \bigoplus_{i=2}^{n+1} \varpi_i^{*}\cO_{\bP(W_i)}(1)
$
on $\prod_{i=2}^{n+1} \bP(W_i)$.

By replacing $X$ with $X^{\prime}$, 
we see that $ \overline{(\varphi^{\prime}_2,\dots,\varphi^{\prime}_r)(Z^{\prime}_1)} \subset  \prod_{i=2}^{n+1} \bP(W_i)$ is the $(n-1)$-th degeneracy locus of \pref{eq:2_in_lem_q'(E'_1)}
since $V^{\prime}$ and $\varpi_i^{*}\cO_{\bP(W_i)}(1) $ are replaced by $V$ and $\varpi_i^* T_{\bP(W_i)}(-1)$ respectively.
\end{proof}

We note that \pref{eq:2_in_lem_q'(E'_1)} does not depend on $s_i$,
hence neither does $\overline{(\varphi^{\prime}_2,\dots,\varphi^{\prime}_r)(Z^{\prime}_1)}$.

\begin{lemma} \label{lm:E_1_not_E_1'}
One has $ \bsqtilde(E_1) \ne  \bsqtilde^{\prime}(E^{\prime}_1)$. 
\end{lemma}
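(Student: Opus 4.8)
The plan is to reduce the assertion, via \pref{lm:exc_div} and \pref{lm:q'(E'_1)}, to the statement that two explicit closed subvarieties of $\prod_{i=2}^{n+1}\bP(W_i)$ are distinct, and then to separate them by producing a general point of one which does not lie on the other.

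First I would set up the reduction. By \pref{lm:exc_div}.(3) and its analogue for $\bss^{\prime}$ recorded just before \pref{lm:q'(E'_1)}, under the identification $\bP(W_1)=\bP(W_1^\dual)$ one has $\bsqtilde(E_1)=\bP(W_1)\times D$ and $\bsqtilde^{\prime}(E^{\prime}_1)=\bP(W_1)\times D^{\prime}$, where $D\coloneqq\overline{(\varphi_2,\dots,\varphi_r)(Z_1)}$ and $D^{\prime}\coloneqq\overline{(\varphi^{\prime}_2,\dots,\varphi^{\prime}_r)(Z^{\prime}_1)}$ are closed subvarieties of $\prod_{i=2}^{n+1}\bP(W_i)\cong(\bP^1)^n$. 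Hence $\bsqtilde(E_1)=\bsqtilde^{\prime}(E^{\prime}_1)$ is equivalent to $D=D^{\prime}$, so I must show $D\ne D^{\prime}$. Here $D$ is irreducible, being the closure of the image of $Z_1=\bP(\ker s_1)\cong\bP^{n-2}$, and one checks (using \pref{lm:q'(E'_1)}, the second degeneracy locus being cut out by a general section) that $\dim D=\dim D^{\prime}=n-2$; so the inequality is not a matter of dimension, and it suffices to exhibit a general point of $D$ not lying on $D^{\prime}$.

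Next I would turn membership in $D^{\prime}$ into linear algebra. Pick a general $v_0\in\ker s_1$ and set $x\coloneqq(\varphi_2,\dots,\varphi_r)(v_0)$; as $v_0$ runs through a dense subset of $\ker s_1$ the point $x$ runs through a dense subset of $D$, and $s_i(v_0)\ne 0$ for all $i=2,\dots,n+1$ because $\ker s_1\not\subseteq\ker s_i$ for general $\bss$. By the \emph{second} description of $D^{\prime}$ in \pref{lm:q'(E'_1)}, $x\in D^{\prime}$ precisely when the fibre at $x$ of $(s_2,\dots,s_{n+1})\colon V\otimes\cO\to\bigoplus_{i=2}^{n+1}\varpi_i^*T_{\bP(W_i)}(-1)$ is not surjective, i.e.\ when the induced map $V\to\bigoplus_{i=2}^{n+1}W_i/\langle s_i(v_0)\rangle$ is not surjective. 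Fixing a volume form on each $W_i\cong\bfk^2$, the annihilator of $W_i/\langle s_i(v_0)\rangle$ in $W_i^\dual$ is spanned by $\det(s_i(v_0),\cdot)$, so this non-surjectivity is equivalent to the linear dependence of the functionals
\begin{align*}
 f_i\coloneqq\det(s_i(v_0),\,s_i(\cdot))\in V^\dual,\qquad i=2,\dots,n+1.
\end{align*}
Thus $x\in D^{\prime}$ if and only if $f_2,\dots,f_{n+1}$ are linearly dependent.

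Finally I would show that $f_2,\dots,f_{n+1}$ are linearly \emph{independent} for general data, which completes the proof. The key observation is that $f_i(v_0)=0$ for every $i$, so $f_2,\dots,f_{n+1}$ — a family of $n$ functionals — lie in the $n$-dimensional subspace $\operatorname{Ann}(v_0)\subset V^\dual$; linear independence is therefore the generic behaviour, and as it is an open condition on the irreducible family of pairs $(\bss,v_0)$ with $v_0\in\ker s_1$, it suffices to verify it for one choice. Taking a basis $v_0,e_1,\dots,e_n$ of $V$, set $s_1\coloneqq(e_1^\dual,e_2^\dual)$ (so $v_0\in\ker s_1$) and, for $i=2,\dots,n+1$, $s_i\coloneqq(v_0^\dual+e_{i-1}^\dual,\,e_{i-1}^\dual)$; then $s_i(v_0)=(1,0)$ and, for a suitable volume form, $f_i=e_{i-1}^\dual$, so $f_2,\dots,f_{n+1}$ are independent. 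Hence a general point of $D$ does not lie on $D^{\prime}$, so $D\ne D^{\prime}$ and $\bsqtilde(E_1)\ne\bsqtilde^{\prime}(E^{\prime}_1)$. The step I expect to be the main obstacle is exactly this last genericity check; it is handled by the explicit configuration above together with the openness of linear independence, while the first three paragraphs are essentially bookkeeping around \pref{lm:exc_div} and \pref{lm:q'(E'_1)}.
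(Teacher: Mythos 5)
Your proposal is correct and takes essentially the same approach as the paper: both reduce the claim to $\overline{(\varphi_2,\dots,\varphi_r)(Z_1)} \ne \overline{(\varphi^{\prime}_2,\dots,\varphi^{\prime}_r)(Z^{\prime}_1)}$ and separate these two subsets of $\prod_{i=2}^{n+1}\bP(W_i)$ pointwise via the two degeneracy-locus descriptions in \pref{lm:q'(E'_1)}. The only differences are the direction and the genericity check: the paper takes a general point $y$ of the second set, observes that $\ker (s_2,\dots,s_{n+1})_y$ is then a general $2$-plane meeting the codimension-two subspace $\ker s_1$ trivially so that the restricted map has rank $n-1$ and $y$ avoids the first set, whereas you take a general point of the first set and certify surjectivity (linear independence of $f_2,\dots,f_{n+1}$) by an explicit configuration plus openness --- a valid, if more computational, substitute.
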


\begin{proof}
It suffices to see $ \overline{(\varphi_2,\dots,\varphi_r)(Z_1)}  \ne  \overline{(\varphi^{\prime}_2,\dots,\varphi^{\prime}_r)(Z^{\prime}_1)}$.
Take a general point $ y \in   \overline{(\varphi^{\prime}_2,\dots,\varphi^{\prime}_r)(Z^{\prime}_1)}$.
By \pref{lm:q'(E'_1)},
the rank of
\begin{align}
 (s_2,\dots,s_{n+1})_y  \colon  V  \to \bigoplus_{i=2}^{n+1} \varpi_i^* T_{\bP(W_i)}(-1) \otimes k(y) 
\end{align}
is $n-1$ since $s_2, \ldots, s_{n+1}$ and $y$ are general.
Hence $\ker (s_2,\dots,s_{n+1})_y \subset V$ is two-dimensional.
Then
$
 \ker (s_2,\dots,s_{n+1})_y \cap \ker s_1 =\{0\}  \subset V
$
since $\ker s_1\subset V$ is of codimension two and general.
This means that \pref{eq:in_lem_q'(E'_1)} has rank $n-1$ at $y$.
By \pref{lm:q'(E'_1)}, we have $y \not \in \overline{(\varphi_2,\dots,\varphi_r)(Z_1)}$.
\end{proof}

\pref{lm:E_1_not_E_1'} and \pref{lm:2_exc_divs} below show that
we have exactly two reconstructions:

\begin{lemma} \label{lm:2_exc_divs}
The exceptional locus of the birational morphism $X \to \prod_{i=2}^{n+1} \bP(W_i)$
is the union of $ \bsqtilde(E_1)$ and $ \bsqtilde^{\prime}(E^{\prime}_1)$.
\end{lemma}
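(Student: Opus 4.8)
\emph{Plan.} Write $\pi \colon X \to \prod_{i=2}^{n+1} \bP(W_i)$ for the morphism induced by the projection forgetting the first factor (this is the map in the statement, and it is the same as the map $\pi$ appearing in the proof of \pref{lm:unique} since $|\bsm^{\dagger}|=n<n+1$). The strategy is to compute every fibre of $\pi$ explicitly, determine the locus over which the fibre is positive-dimensional, and identify that locus with the bases of $\bsqtilde(E_1)$ and $\bsqtilde^{\prime}(E^{\prime}_1)$ by means of \pref{lm:q'(E'_1)}.

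First I would analyse the fibres. Fix $y = (y_2, \dots, y_{n+1}) \in \prod_{i=2}^{n+1} \bP(W_i)$ and $t \in \bP(W_1)$. By \pref{eq:X=X_n} and \pref{eq:q_*s}, the point $(t,y)$ lies in $X=X_n$ if and only if the linear map $(\bsq_* \bss)_{(t,y)} \colon V \to \bigoplus_{i=1}^{n+1} \varpi_i^* T_{\bP(W_i)}(-1) \otimes k(t,y)$ is not injective, the target being $(n+1)$-dimensional. Using the Euler sequences, the fibre of $\varpi_i^* T_{\bP(W_i)}(-1)$ at a point $w$ is canonically $W_i / \widetilde w$, where $\widetilde w \subset W_i$ is the line corresponding to $w$, and under these identifications $(\bsq_* \bss)_{(t,y)}$ becomes the direct sum of $V \xto{s_1} W_1 \to W_1/\widetilde t$ and $V \xto{s_i} W_i \to W_i/\widetilde{y_i}$ for $i=2,\dots,n+1$. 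Setting $K_y \coloneqq \ker \lb (s_2,\dots,s_{n+1})_y \colon V \to \bigoplus_{i=2}^{n+1} W_i/\widetilde{y_i} \rb$, this shows that $(t,y)\in X$ exactly when there is a nonzero $v \in K_y$ with $s_1(v)\in \widetilde t$.

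Consequently $\pi^{-1}(y)=\bP(W_1)$ if $\ker s_1 \cap K_y \ne \{0\}$ (take $v$ in this intersection), while $\pi^{-1}(y)_{\mathrm{red}}=\bP(s_1(K_y))$ otherwise; since $\dim s_1(K_y)=\dim K_y$ in the latter case and $\dim W_1=2$, the fibre $\pi^{-1}(y)$ is positive-dimensional precisely when $\ker s_1 \cap K_y \ne \{0\}$ or $\dim K_y \ge 2$, and is a single reduced point otherwise (so in particular $\pi$ is birational, and by a Zariski's main theorem argument it is an isomorphism away from the positive-dimensional-fibre locus). Hence the exceptional locus of $\pi$ is $\pi^{-1}(W)=\bP(W_1)\times W$, where
\begin{align}
 W \coloneqq \lc y \relmid \ker s_1 \cap K_y \ne \{0\} \rc \cup \lc y \relmid \dim K_y \ge 2 \rc .
\end{align}
To identify the two pieces of $W$: the map \pref{eq:in_lem_q'(E'_1)} specialises at $y$ to $(s_2,\dots,s_{n+1})_y|_{\ker s_1}$, whose kernel is $\ker s_1 \cap K_y$, so $\{y \mid \ker s_1\cap K_y \ne \{0\}\}$ is the $(n-2)$-th degeneracy locus of \pref{eq:in_lem_q'(E'_1)}, which is $\overline{(\varphi_2,\dots,\varphi_r)(Z_1)}$ by \pref{lm:q'(E'_1)}; likewise $\dim K_y\ge 2$ says $(s_2,\dots,s_{n+1})_y$ has rank at most $n-1$, so $\{y \mid \dim K_y \ge 2\}$ is the $(n-1)$-th degeneracy locus of \pref{eq:2_in_lem_q'(E'_1)}, which is $\overline{(\varphi^{\prime}_2,\dots,\varphi^{\prime}_r)(Z^{\prime}_1)}$ by \pref{lm:q'(E'_1)}. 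Combining with $\bsqtilde(E_1)=\bP(W_1)\times\overline{(\varphi_2,\dots,\varphi_r)(Z_1)}$ and $\bsqtilde^{\prime}(E^{\prime}_1)=\bP(W_1^\dual)\times\overline{(\varphi^{\prime}_2,\dots,\varphi^{\prime}_r)(Z^{\prime}_1)}$ from \pref{lm:exc_div} then gives $\bP(W_1)\times W=\bsqtilde(E_1)\cup\bsqtilde^{\prime}(E^{\prime}_1)$.

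The step I expect to cost the most effort is the fibrewise computation: passing correctly from the non-injectivity of $(\bsq_*\bss)_{(t,y)}$ to the intersection condition inside $V$, and then carefully splitting into the cases governed by $\ker s_1\cap K_y$ and $\dim K_y$ (including the check that a finite fibre is a single reduced point, so that $\pi$ is genuinely an isomorphism there). Once this analysis is in place, \pref{lm:q'(E'_1)} translates the two cases into $\bsqtilde(E_1)$ and $\bsqtilde^{\prime}(E^{\prime}_1)$ essentially formally.
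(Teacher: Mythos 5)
Your proposal is correct and follows essentially the same route as the paper: the paper also reduces to identifying the locus of $y$ with $\bP(W_1)\times\{y\}\subset X$, and its key step is exactly your linear algebra (the rank of $(s_2,\dots,s_{n+1})_y$ and of its restriction to $\ker s_1$, translated via \pref{lm:q'(E'_1)} into the two degeneracy loci). The only cosmetic difference is that you obtain both inclusions of \pref{eq:2_exc_locus} from a single uniform fibre computation, whereas the paper gets the inclusion $\supset$ for free from \pref{lm:exc_div} and proves $\subset$ by the contrapositive.
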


\begin{proof}

Since $X \subset \bP(W_1) \times \prod_{i=2}^{n+1} \bP(W_i)$,
the exceptional locus of $X \to \prod_{i=2}^{n+1} \bP(W_i)$ is 
\begin{align}
 \bP(W_1) \times \lc y \in  \prod_{i=2}^{n+1} \bP(W_i) \relmid \bP(W_1) \times \{y\} \subset X \rc
  \subset X. 
\end{align}
Hence we need to show 
\begin{align} \label{eq:2_exc_locus}
 \lc y \in  \prod_{i=2}^{n+1} \bP(W_i) \relmid \bP(W_1) \times \{y\} \subset X \rc
  = \overline{(\varphi_2,\dots,\varphi_r)(Z_1)} \cup  \overline{(\varphi^{\prime}_2,\dots,\varphi^{\prime}_r)(Z^{\prime}_1)}.
\end{align}
Since
$
 \bsqtilde(E_1) = \bP(W_1) \times \overline{(\varphi_2,\dots,\varphi_r)(Z_1)}
$
and
$
 \bsqtilde^{\prime}(E^{\prime}_1) = \bP(W_1^\dual) \times \overline{(\varphi^{\prime}_2,\dots,\varphi^{\prime}_r)(Z^{\prime}_1)},
$
the inclusion $\supset$ in \pref{eq:2_exc_locus} is clear.
To show the converse inclusion,
we take
$
 y \nin \overline{(\varphi_2,\dots,\varphi_r)(Z_1)} \cup  \overline{(\varphi^{\prime}_2,\dots,\varphi^{\prime}_r)(Z^{\prime}_1)}
$
and show
$
 \bP(W_1) \times \{y\} \not \subset X.
$
By \pref{lm:q'(E'_1)}, the linear map
\begin{align}
 (s_2,\dots,s_{n+1})_y \colon V \to U \coloneqq \bigoplus_{i=2}^{n+1} \varpi_i^* T_{\bP(W_i)}(-1) \otimes k(y)
\end{align}
has rank $n$ and the restriction $(s_2,\dots,s_{n+1})_y |_{\ker s_1}$ has rank $n-1$.
Recall that the dimensions of $V$, $U$, and $\ker s_1$ are $n+1,n$, and $n-1$ respectively.
Hence $\ker (s_2,\dots,s_{n+1})_y \subset V$ is one-dimensional
and $\ker (s_2,\dots,s_{n+1})_y \cap \ker s_1 =\{0\} \subset V$.
Let $K \subset W_1$ be the image of $\ker (s_2,\dots,s_{n+1})_y$  by $s_1 \colon V \to W_1$.
Then we have a diagram
\begin{equation}
\begin{aligned}
  \xymatrix{  
    0 \ar[r] & \ker (s_2,\dots,s_{n+1})_y \otimes \cO \ar[r]  \ar[d]_{\wr}
    & V \otimes \cO  \ar[r]^(.5){ (s_2,\dots,s_{n+1})_y } \ar[d]^{\bss|_{\bP(W_1) \times \{y\} }} & U \otimes \cO  \ar[r] \ar@{=}[d]  & 0
    \\
     & K \otimes \cO   \ar[r]^(.4){(c,0)} &  T_{\bP(W_1)}(-1) \oplus (U \otimes \cO) \ar[r] & 
     U \otimes \cO \ar[r] & 0
  }
\end{aligned}
\end{equation}
on $\bP(W_1) \times \{y\}$,
where $c \colon K \otimes \cO \hookrightarrow W_1 \otimes \cO \to T_{\bP(W_1)}(-1) $ is the canonical map.
Then
$(c,0) $ is injective outside of the point $x_0 \in \bP(W_1)$
corresponding to the one-dimensional subspace $K \subset W_1$.
Hence the rank of $ \bss |_{\bP(W_1) \times \{y\} }$ is $n+1$ at any $x_1 \ne x_0 \in \bP(W_1)$,
which means that $(\bP(W_1) \setminus \{x_0\}) \times \{y\}  $ is not contained in $X$.
Thus $y$ is not contained in the left-hand side of \pref{eq:2_exc_locus}.
\end{proof}

\begin{proof}[Proof of \pref{th:main} (2)]
Lemmas \ref{lm:E_1_not_E_1'} and \ref{lm:2_exc_divs}
show that
$\bsqtilde(E_1) \subset X$ is one of the exceptional prime divisors of the birational morphism $X \to \prod_{i=2}^{n+1} \bP(W_i)$.
If we choose one of such divisors, we can reconstruct $\bsvarphi^{-1}$ or ${\bsvarphi^{\prime}}^{-1}$ by \pref{lm:inverse}
as in the proof of \pref{th:main} (1).
\end{proof}

\begin{remark}\label{rm:overR}
If $\varphi $ is defined over $\bR$,
so is $\varphi ^{\prime}$.
This follows from the construction of $\bss^{\prime}$ in (\ref{eq_def_s'}).
\end{remark}

We have the diagram
\begin{align}
\begin{aligned}
\xymatrix{
  & \Xtilde \ar[ld]_{ \ptilde} \ar[rd]^{\bsqtilde} \ar@{-->}[rr] & &  \Xtilde^{\prime}  \ar[ld]_{\bsqtilde^{\prime}} \ar[rd]^{ \ptilde^{\prime}} & \\ 
\bP(V) \ar@{-->}[rr]^{\bsvarphi} &   &  X =X^{\prime} & & \bP(V^{\prime}). \ar@{-->}[ll]_{\bsvarphi^{\prime}} \\
}
\end{aligned}
\end{align}
In the rest of this section, 
we describe the birational map ${\bsvarphi^{\prime}}^{-1} \circ \bsvarphi \colon \bP(V) \dto \bP(V^{\prime}) $.
Recall the definition of $L_i$ from \pref{eq:L_i}.

\begin{lemma}\label{lm:class}
The divisor $\bsqtilde(E_1) + \bsqtilde^{\prime}(E^{\prime}_1)$ on $X$
is linearly equivalent to $-L_1 + \sum_{i=2}^{n+1} L_i$.
\end{lemma}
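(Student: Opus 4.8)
The plan is to express $\bsqtilde(E_1)+\bsqtilde^{\prime}(E^{\prime}_1)$ as the divisor $K_X-\pi^\ast K_B$ for the birational projection $\pi\colon X\to B:=\prod_{i=2}^{n+1}\bP(W_i)$, and then to pin down the two contraction multiplicities by pulling the resulting identity back to $\Xtilde$, where the divisor class group is completely explicit. Throughout, write $H$ for the hyperplane class on $\bP^n=\bP(V)$ and $h_i$ for the point class on $\bP(W_i)=\bP^1$, so that $L_i=\varpi_i^\ast h_i|_X$ and $E_i\equiv\ptilde^\ast H-\qtilde_i^\ast h_i$ (the latter from $\cO_{\Xtilde}(E_i)\cong\ptilde^\ast\cO_{\bP^n}(1)\otimes\qtilde_i^\ast\cO_{\bP(W_i)}(-1)$), with the analogous notation on $\Xtilde^{\prime}$.

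I would begin with two standard inputs. First, $X=X_n$ is a divisor of class $(1,\dots,1)$ in $(\bP^1)^{n+1}$, being the vanishing locus of $\det(\bsq_\ast\bss)$ for a homomorphism of rank-$(n+1)$ bundles with determinants $\bigotimes_i\varpi_i^\ast\cO(1)$ and $\cO$; since $X$ is an effective Cartier divisor in a smooth variety, adjunction gives $K_X=-\sum_{i=1}^{n+1}L_i$. Second, $\bsqtilde$ is small by \pref{lm:small} (which applies, as $|\bsm|=n+1$) and $X$ is normal, so $\bsqtilde^\ast\colon\operatorname{Cl}(X)\xrightarrow{\sim}\operatorname{Cl}(\Xtilde)$ is an isomorphism with $\bsqtilde^\ast K_X=K_{\Xtilde}$, $\bsqtilde^\ast\bsqtilde(E_1)=E_1$ and $\bsqtilde^\ast\bsqtilde^{\prime}(E^{\prime}_1)=\Dtilde$, the proper transform of $\bsqtilde^{\prime}(E^{\prime}_1)$ in $\Xtilde$; the same holds for $\bsqtilde^{\prime}$ on $\Xtilde^{\prime}$ (recall $X^{\prime}=X$ by \pref{lm:X=X'}).

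Now by \pref{lm:2_exc_divs} the map $\pi$ is birational with exceptional locus $\bsqtilde(E_1)\cup\bsqtilde^{\prime}(E^{\prime}_1)$, so $K_X-\pi^\ast K_B$ — being trivial away from this locus — equals $a\,\bsqtilde(E_1)+b\,\bsqtilde^{\prime}(E^{\prime}_1)$ in $\operatorname{Cl}(X)$ for some $a,b\in\bZ$. Pulling this back along $\bsqtilde$ and using $K_{\Xtilde}=\bsqtilde^\ast(-\sum_iL_i)=-(n+1)\ptilde^\ast H+\sum_{i=1}^{n+1}E_i$ together with $(\pi\circ\bsqtilde)^\ast K_B=-2\sum_{i=2}^{n+1}(\ptilde^\ast H-E_i)$, I obtain
\[
aE_1+b\,\Dtilde\ \equiv\ (n-1)\,\ptilde^\ast H+E_1-\sum_{i=2}^{n+1}E_i
\qquad\text{in }\operatorname{Cl}(\Xtilde).
\]
Since $\Xtilde$ is smooth and $\ptilde\colon\Xtilde\to\bP^n$ is birational with exceptional prime divisors exactly $E_1,\dots,E_{n+1}$ (\pref{lm:exc_div}), one has $\operatorname{Cl}(\Xtilde)=\bZ\,\ptilde^\ast H\oplus\bigoplus_{i=1}^{n+1}\bZ E_i$. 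One then checks that $\Dtilde$ differs from every $E_i$ — for $i\ge2$ because $\bsqtilde(E_i)$ contains a whole $\bP(W_i)$-fibre whereas $\bsqtilde^{\prime}(E^{\prime}_1)$ does not, and for $i=1$ by \pref{lm:E_1_not_E_1'} — so $\Dtilde$ is the proper transform of the hypersurface $\ptilde(\Dtilde)\subset\bP^n$; writing $\Dtilde\equiv\alpha\,\ptilde^\ast H+\sum_i\beta_iE_i$, this gives $\alpha=\deg\ptilde(\Dtilde)\ge1$ and $\beta_i\le0$ for every $i$. Comparing coefficients of $E_j$ for $2\le j\le n+1$ in the displayed identity forces $b\beta_j=-1$, hence $b=1$; the identical computation carried out on $\Xtilde^{\prime}$ forces $a=1$. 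Consequently
\[
\bsqtilde(E_1)+\bsqtilde^{\prime}(E^{\prime}_1)\ \equiv\ K_X-\pi^\ast K_B\ =\ -\sum_{i=1}^{n+1}L_i+2\sum_{i=2}^{n+1}L_i\ =\ -L_1+\sum_{i=2}^{n+1}L_i .
\]

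The step I expect to need the most care is the last one — that the contraction multiplicities $a,b$ are exactly $1$. Rather than invoking a general discrepancy estimate for $\pi$, the plan exploits that $\Dtilde$ is a proper transform of a divisor from $\bP^n$, so all of its $E_i$-coefficients are automatically $\le0$, whence an integer $b$ with $b\beta_j=-1$ can only be $1$. The remaining points are formal but deserve attention: smallness of $\bsqtilde$ is genuinely used (to make $\bsqtilde^\ast$ an isomorphism identifying $\bsqtilde(E_i)$ with $E_i$ and $K_X$ with $K_{\Xtilde}$), and the basis of $\operatorname{Cl}(\Xtilde)$ employed is precisely the one furnished by the blow-up description of $\ptilde$ in \pref{lm:exc_div}.
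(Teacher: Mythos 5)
Your proof is correct and follows essentially the same route as the paper: both express $\bsqtilde(E_1)+\bsqtilde^{\prime}(E^{\prime}_1)$ as $K_X-\pi^{*}K_{\prod_{i\ge 2}\bP(W_i)}$ via \pref{lm:2_exc_divs} and compute $K_X=-\sum_{i=1}^{n+1}L_i$ by adjunction from the class of $X$ in $(\bP^1)^{n+1}$. The only difference is how the two coefficients are shown to equal $1$: the paper identifies $\pi$ over the generic point of $Z_1$ with the blow-up of the smooth codimension-two center $Z_1\subset\bP(V)$ (so the discrepancy is $1$), while you pin them down by an integrality-and-sign comparison in $\operatorname{Cl}(\Xtilde)$ --- slightly longer, but equally valid.
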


\begin{proof}
Since the exceptional locus of the birational morphism
\begin{align}\label{eq:bir_morphism}
(\varpi_2,\dots,\varpi_{n+1}) |_X \colon X \to \prod_{i=2}^{n+1} \bP(W_i)
\end{align}
is $\bsqtilde(E_1) \cup \bsqtilde^{\prime}(E^{\prime}_1)$ by \pref{lm:2_exc_divs},
we can write
\begin{align} \label{eq:can_divs}
K_{X} = (\varpi_2,\dots,\varpi_{n+1})^* K_{\prod_{i=2}^{n+1} \bP(W_i)} + a \bsqtilde(E_1) +a^{\prime} \bsqtilde^{\prime}(E^{\prime}_1)
\end{align}
for some integers $a$ and $a^{\prime}$.
By the birational map
\begin{align}
 (\varphi_2,\dots,\varphi_{n+1}) \colon \bP(V) \dto \prod_{i=2}^{n+1} \bP(W_i),
\end{align}
the birational morphism
\pref{eq:bir_morphism} can be identified with the blow-up
$
 \Bl_{Z_1} \bP(V) \to \bP(V)
$
over the generic point of $Z_1$.
Hence the integer $a$ in \pref{eq:can_divs},
which is the coefficient of
$
 \bsqtilde(E_1) =\bP(W_1) \times \overline{(\varphi_2,\dots,\varphi_r)(Z_1)},
$
is one.
Similarly one has $a^{\prime}=1$, and
\begin{align}
 \bsqtilde(E_1) + \bsqtilde^{\prime}(E^{\prime}_1) = K_X -(\varpi_2,\dots,\varpi_{n+1})^* K_{\prod_{i=2}^{n+1} \bP(W_i)}
\end{align}
holds.
By \pref{eq:degeneracy},
the divisor $X=X_n \subset \prod_{i=1}^{n+1} \bP(W_i)$ is the zero locus of 
\begin{align}
 \lb \bsq_* \bss \rb^{\wedge (n+1)}
  \colon  \cO_{\prod_{i=1}^{n+1} \bP(W_i)} \simeq \bigwedge^{n+1} V \otimes \cO_{\prod_{i=1}^{n+1} \bP(W_i)} \to \bigwedge^{n+1} \bigoplus_{i=1}^{n+1} \varpi_i^* T_{\bP(W_i)}(-1) \simeq  \bigotimes_{i=1}^{n+1} \varpi_i^* \cO_{\bP(W_i)}(1).
\end{align}
Thus
$X$ is linearly equivalent to
$
 \sum_{i=1}^{n+1} \varpi_i^* \cO_{\bP(W_i)}(1)
$
on $\prod_{i=1}^{n+1} \bP(W_i)$.
Hence 
we have
$
 K_{X}= -\sum_{i=1}^{n+1} L_i
$
by the adjunction formula.
Since
$
 (\varpi_2,\dots,\varpi_{n+1})^* K_{\prod_{i=2}^{n+1} \bP(W_i)} = -2 \sum_{i=2}^{n+1} L_i,
$
one has
\begin{align}
 \bsqtilde(E_1) + \bsqtilde^{\prime}(E^{\prime}_1)
  &= K_X -(\varpi_2,\dots,\varpi_{n+1})^* K_{\prod_{i=2}^{n+1} \bP(W_i)}  = -L_1 + \sum_{i=2}^{n+1} L_i,
\end{align}
and \pref{lm:class} is proved.
\end{proof}

For each $i = 1, \ldots, n+1$,
let $F_i \subset \Xtilde$ be the strict transform of the divisor $\bsqtilde^{\prime}(E^{\prime}_i)  \subset X^{\prime} =X$.

\begin{lemma}\label{lm:classF}
The divisor $F_1$ 
is linearly equivalent to $\ptilde^* \cO_{\bP(V)}(n-1) - \sum_{i=2}^{n+1} E_i$.
\end{lemma}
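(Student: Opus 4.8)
The plan is to read off the class of $F_1$ by pulling back the linear equivalence of \pref{lm:class} along $\bsqtilde \colon \Xtilde \to X$ and then rewriting everything in terms of the $E_i$ and $\ptilde^{*}\cO_{\bP(V)}(1)$. First I would record how divisor classes transport across $\bsqtilde$. By \pref{lm:small} the morphism $\bsqtilde$ is small, $\Xtilde$ is smooth hence normal by \pref{lm:Zs=X}, and $X$ is normal; if $U \subseteq X$ is the largest open subset over which $\bsqtilde$ is an isomorphism, then smallness forces $X \setminus U$ and $\Xtilde \setminus \bsqtilde^{-1}(U)$ to have codimension at least two. Restriction therefore gives compatible identifications $\operatorname{Cl}(X) = \operatorname{Cl}(U) = \operatorname{Cl}\lb\bsqtilde^{-1}(U)\rb = \operatorname{Cl}(\Xtilde)$ respecting linear equivalence, under which $L_i$ corresponds to $\qtilde_i^{*}\cO_{\bP(W_i)}(1)$, the Weil divisor $\bsqtilde(E_1)$ corresponds to $E_1$, and $\bsqtilde^{\prime}(E^{\prime}_1)$ corresponds to its strict transform $F_1$, the latter being precisely the definition of $F_1$. (Alternatively this can be phrased via $\bsqtilde_{*}\cO_{\Xtilde} = \cO_X$ and the isomorphism of class groups induced by a small proper birational morphism of normal varieties.)

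Applying this dictionary to \pref{lm:class} yields on $\Xtilde$ the relation
\[
 E_1 + F_1 \sim -\,\qtilde_1^{*}\cO_{\bP(W_1)}(1) + \sum_{i=2}^{n+1}\qtilde_i^{*}\cO_{\bP(W_i)}(1).
\]
Next I would substitute $\qtilde_i^{*}\cO_{\bP(W_i)}(1) \sim \ptilde^{*}\cO_{\bP(V)}(1) - E_i$ for each $i$, which follows from the isomorphism $\cO_{\Xtilde}(E_i) \cong \ptilde^{*}\cO_{\bP(V)}(1)\otimes\qtilde_i^{*}\cO_{\bP(W_i)}(-1)$ recalled before \pref{lm:exc_div}; since the index set $\{2,\dots,n+1\}$ has $n$ elements, this turns the above into
\[
 E_1 + F_1 \sim (n-1)\,\ptilde^{*}\cO_{\bP(V)}(1) + E_1 - \sum_{i=2}^{n+1}E_i,
\]
and cancelling $E_1$ gives the claimed equivalence $F_1 \sim \ptilde^{*}\cO_{\bP(V)}(n-1) - \sum_{i=2}^{n+1}E_i$.

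The computation itself is short and formal; the one step that requires genuine care is the very first, namely justifying that a linear equivalence of (a priori non-Cartier) Weil divisors on $X$ may be pulled back to $\Xtilde$. This is exactly where smallness of $\bsqtilde$ — together with normality of $X$ and smoothness of $\Xtilde$ — enters, and I expect it to be the only real obstacle.
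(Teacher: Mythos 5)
Your proof is correct and follows exactly the same route as the paper, which deduces the claim as an immediate consequence of \pref{lm:class} together with the equivalences $\qtilde_i^* \cO_{\bP(W_i)}(1) \sim \ptilde^* \cO_{\bP(V)}(1) - E_i$. The only difference is that you make explicit the transport of the Weil divisor class group along the small morphism $\bsqtilde$, a step the paper leaves implicit; your justification of it is sound.
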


\begin{proof}
This is an immediate consequence of
\pref{lm:class} and the linear equivalences
$
 \qtilde_i^* \cO_{\bP(W_i)}(1) \sim \ptilde^* \cO_{\bP(V)}(1) - E_i
$
for $1 \le i \le n+1$.
\end{proof}

\begin{corollary}
\label{cr:cremona}
\begin{enumerate}
\item The birational map
${\bsvarphi^{\prime}}^{-1} \circ \bsvarphi \colon \bP(V) \dto \bP(V^{\prime}) $
is obtained by the linear system
$
 \left| \cO_{\bP(V)}(n)  \otimes I_{\bigcup_{i=1}^{n+1} Z_i} \right|.
$
\item For each $i $,
the image $\ptilde \lb F_i \rb \subset \bP(V)$ is the unique hypersurface of degree $n-1$ containing
$Z_j$ for all $j \in \{ 1, \ldots, n+1 \} \setminus \{ i \}$.
\item
The birational map ${\bsvarphi^{\prime}}^{-1} \circ \bsvarphi $ contracts the hypersurface $\ptilde \lb F_i \rb$ to $Z^{\prime}_i$ for each $i$.
\end{enumerate}
\end{corollary}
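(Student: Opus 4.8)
The plan is to read all three assertions off the divisor geometry of $\Xtilde$ already established, using the symmetry between $\bsvarphi$ and $\bsvarphi^{\prime}$ furnished by \pref{lm:X=X'}, \pref{lm:classF} and \pref{lm:class}. Write $\psi \coloneqq {\bsvarphi^{\prime}}^{-1}\circ\bsvarphi\colon\bP(V)\dashrightarrow\bP(V^{\prime})$ and let $\mu\coloneqq{\bsqtilde^{\prime}}^{-1}\circ\bsqtilde\colon\Xtilde\dashrightarrow\Xtilde^{\prime}$. Since $\bsqtilde$ and $\bsqtilde^{\prime}$ are small by \pref{lm:small}, $\mu$ is an isomorphism in codimension one, and by the very definition of $F_i$ it carries the prime divisor $F_i$ onto $E^{\prime}_i$ (because $\bsqtilde(F_i)=\bsqtilde^{\prime}(E^{\prime}_i)$, and $E^{\prime}_i$ is in turn the strict transform of $\bsqtilde^{\prime}(E^{\prime}_i)$ under $\bsqtilde^{\prime}$); in particular $\psi=\ptilde^{\prime}\circ\mu\circ\ptilde^{-1}$ as rational maps.

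For (1) I would apply \pref{lm:inverse} to the configuration $\bss^{\prime}$: under the identifications $\bP(W_i^\dual)=\bP(W_i)$, $L^{\prime}_i=L_i$ and $X^{\prime}=X$ of \pref{lm:X=X'}, the inverse of $\bsvarphi^{\prime}$ is given by $\left|\cO_X\lb\bsqtilde^{\prime}(E^{\prime}_1)\rb\otimes L_1\right|$. Pulling this linear system back through the resolution $\ptilde\colon\Xtilde\to\bP(V)$, $\bsqtilde\colon\Xtilde\to X$ of $\bsvarphi$ and using smallness of $\bsqtilde$ exactly as in the proof of \pref{lm:inverse}, its space of sections becomes $H^0\lb\Xtilde,\cO_{\Xtilde}(F_1)\otimes\qtilde_1^*\cO_{\bP(W_1)}(1)\rb$. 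By \pref{lm:classF} and the equivalence $\qtilde_1^*\cO_{\bP(W_1)}(1)\sim\ptilde^*\cO_{\bP(V)}(1)-E_1$, this line bundle equals $\ptilde^*\cO_{\bP(V)}(n)\otimes\cO_{\Xtilde}\lb-\sum_{i=1}^{n+1}E_i\rb$; since $E_i=\ptilde^{-1}(Z_i)$ and $\ptilde$ is the blow-up along $Z_i$ near the generic point of $Z_i$, the projection formula gives $\ptilde_*\lb\ptilde^*\cO_{\bP(V)}(n)\otimes\cO_{\Xtilde}(-\sum_i E_i)\rb=\cO_{\bP(V)}(n)\otimes I_{\bigcup_{i=1}^{n+1}Z_i}$. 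Thus $\psi$ is defined by the complete linear system $\left|\cO_{\bP(V)}(n)\otimes I_{\bigcup_{i=1}^{n+1}Z_i}\right|$ (it is complete because both sides of the above identification have dimension $\dim V^{\prime}=n+1$).

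For (2), the same computation with $F_i$ in place of $F_1$ together with the symmetric form $\cO_{\Xtilde}(F_i)\cong\ptilde^*\cO_{\bP(V)}(n-1)\otimes\cO_{\Xtilde}\lb-\sum_{j\ne i}E_j\rb$ of \pref{lm:classF} gives $\ptilde_*\cO_{\Xtilde}(F_i)=\cO_{\bP(V)}(n-1)\otimes I_{\bigcup_{j\ne i}Z_j}$; intersecting $F_i$ with the strict transform of a general line of $\bP(V)$ shows that $\ptilde(F_i)$ is a hypersurface of degree $n-1$, and as a member of $\left|\cO_{\bP(V)}(n-1)\otimes I_{\bigcup_{j\ne i}Z_j}\right|$ it contains $Z_j$ for every $j\ne i$, which settles existence. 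Uniqueness is exactly the statement $h^0\lb\bP(V),\cO_{\bP(V)}(n-1)\otimes I_{\bigcup_{j\ne i}Z_j}\rb=1$, which I would prove by the chain
\begin{align*}
 h^0\lb\bP(V),\cO_{\bP(V)}(n-1)\otimes I_{\bigcup_{j\ne i}Z_j}\rb
 &= h^0\lb\Xtilde,\cO_{\Xtilde}(F_i)\rb
 = h^0\lb X,\cO_X\lb\bsqtilde^{\prime}(E^{\prime}_i)\rb\rb \\
 &= h^0\lb\Xtilde^{\prime},\cO_{\Xtilde^{\prime}}(E^{\prime}_i)\rb
 = 1,
\end{align*}
where the two middle equalities use smallness of $\bsqtilde$ and $\bsqtilde^{\prime}$ together with $\bsqtilde(F_i)=\bsqtilde^{\prime}(E^{\prime}_i)$, and the last holds because $E^{\prime}_i$ is an irreducible divisor contracted by $\ptilde^{\prime}$ onto the codimension-two locus $Z^{\prime}_i$: any effective divisor linearly equivalent to $E^{\prime}_i$ has trivial $\ptilde^{\prime}$-pushforward, hence is supported on $E^{\prime}_1\cup\dots\cup E^{\prime}_{n+1}$, hence equals $E^{\prime}_i$ by the linear independence of $E^{\prime}_1,\dots,E^{\prime}_{n+1}$ modulo $(\ptilde^{\prime})^*\operatorname{Pic}\bP(V^{\prime})$.

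Finally, (3) is immediate once (1) and (2) are in place: $\mu$ carries $F_i$ onto $E^{\prime}_i$, $\ptilde^{\prime}$ contracts $E^{\prime}_i=(\ptilde^{\prime})^{-1}(Z^{\prime}_i)$ onto $Z^{\prime}_i=\bP(\ker s^{\prime}_i)$ (the analogue of the identity $E_i=\ptilde^{-1}(Z_i)$ together with $\codim Z^{\prime}_i=2$), and $\ptilde$ maps $F_i$ generically finitely onto the hypersurface $\ptilde(F_i)$; so the factorization $\psi=\ptilde^{\prime}\circ\mu\circ\ptilde^{-1}$ shows that $\psi$ contracts $\ptilde(F_i)$ onto $Z^{\prime}_i$. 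I expect the uniqueness step in (2) to be the main obstacle: a naive count of the conditions imposed on degree-$(n-1)$ forms by the $n$ focal loci $\{Z_j\}_{j\ne i}$ already fails for $n\ge4$, so one genuinely has to transfer the problem — via the small birational models $\Xtilde$ and $\Xtilde^{\prime}$ — to the rigidity of the exceptional divisor $E^{\prime}_i$ on $\Xtilde^{\prime}$, which is what the displayed chain of equalities accomplishes.
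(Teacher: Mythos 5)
Your proof is correct and follows essentially the same route as the paper's: apply \pref{lm:inverse} to the primed configuration, transport the linear system to $\Xtilde$ via smallness of $\bsqtilde$, compute its class with \pref{lm:classF}, and reduce the uniqueness in (2) to the rigidity of the exceptional divisor $E^{\prime}_i$ on $\Xtilde^{\prime}$. The only difference is that you spell out a few steps the paper leaves implicit (the pushforward identification with the ideal sheaf, and why $\left|E^{\prime}_i\right|$ is $0$-dimensional), which is harmless.
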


\begin{proof}
 (1) By \pref{lm:inverse}, the birational map ${\bsvarphi^{\prime}}^{-1} $ is obtained by the linear system $\left| \cO_X(\bsqtilde^{\prime}(E^{\prime}_1)) \otimes  L_1 \right|$,
 which is identified with $\left| \cO_{\Xtilde}(F) \otimes \qtilde_1^* \cO_{\bP(W_1)}(1)  \right|$ by $\bsqtilde$.
Since  
 \begin{align}
 F_1  + \qtilde_1^* \cO_{\bP(W_1)}(1)
  &\sim \ptilde^* \cO_{\bP(V)}(n-1) - \sum_{i=2}^{n+1} E_i  +  \ptilde^* \cO_{\bP(V)}(1)  -E_1 \\
  &= \ptilde^* \cO_{\bP(V)}(n)  - \sum_{i=1}^{n+1} E_i
   \label{eq:O_V'(1)}
\end{align}
follows from \pref{lm:classF},
 $\left| \cO_{\Xtilde}(F) \otimes \qtilde_1^* \cO_{\bP(W_1)}(1)  \right|$
 is identified with 
 $
 \left| \cO_{\bP(V)}(n)  \otimes I_{\bigcup_{i=1}^{n+1} Z_i} \right|
$
by $\ptilde$.
Hence ${\bsvarphi^{\prime}}^{-1} \circ \bsvarphi = {\bsvarphi^{\prime}}^{-1} \circ \bsqtilde \circ \ptilde^{-1}$ is obtained by  
$
 \left| \cO_{\bP(V)}(n)  \otimes I_{\bigcup_{i=1}^{n+1} Z_i} \right|
$.
\\
(2)  It suffices to show this statement for $i=1$.
The linear system on $\bP(V)$
consisting of divisors of degree $n-1$
containing $Z_2, \ldots, Z_{n+1}$
is identified
with 
$\left| \ptilde^* \cO_{\bP(V)}(n-1) - \sum_{i=2}^{n+1} E_i \right|$
on $\Xtilde$ by $\ptilde$.
By \pref{lm:classF}, we have $\left| \ptilde^* \cO_{\bP(V)}(n-1) - \sum_{i=2}^{n+1} E_i \right| =|F_1|$,
which in turn is identified with the linear system $\left| E_1^{\prime} \right|$ on $\Xtilde^{\prime}$.
The linear system $\left| E_1^{\prime} \right|$
is 0-dimensional
since $E_1^{\prime}$ is an exceptional divisor.
Hence $\ptilde \lb F_1 \rb $ is the unique such divisor.
\\
(3) This statement holds since each $E^{\prime}_i \subset \Xtilde^{\prime}$ is contracted to $Z^{\prime}_i$.
\end{proof}

\section{Dominance for {$|\pmb{m}| \geq 2n-1$}}
 \label{sc:dominance}

We use the {\black same notation as} in \pref{sc:pr1}.

\begin{lemma}\label{lm:birational}
 The rational map
\begin{align}
\Phitilde {\black \colon}
 \prod_{i=1}^r \bP \lb V^{\vee} \otimes W_i \rb
  \dashrightarrow
 \Hilb \lb \bP^n \times \bP^{\bsm} \rb
{\black , \qquad}  \bsvarphi =  (\varphi_1, \dots, \varphi_r) \mapsto [ \Xtilde ]
\end{align}
 is birational onto an irreducible component.
\end{lemma}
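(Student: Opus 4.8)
The plan is to exhibit a rational inverse to $\Phitilde$ defined on a dense open subset of the image. First I would observe that the construction $\bsvarphi \mapsto [\Xtilde]$ is indeed a rational map: for general $\bsvarphi$ the section $\bss$ of \pref{eq:section} is general in the sense used in Section~\ref{sc:pr1}, so by \pref{lm:Zs=X} the graph closure $\Xtilde$ equals the zero locus $Z_\bss$, which is cut out by a section that depends algebraically on $\bsvarphi$; hence its Hilbert point varies algebraically, and $\Phitilde$ is a morphism on a dense open locus. Since $\prod_i \bP(V^\vee \otimes W_i)$ is irreducible, the image of $\Phitilde$ lies in a single irreducible component $\calH_0$ of $\Hilb(\bP^n \times \bP^\bsm)$, and it remains to show $\Phitilde$ is birational onto $\calH_0$, i.e.\ generically injective with image of the same dimension.

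For generic injectivity, suppose $[\Xtilde]$ is given. By \pref{lm:exc_div}.(1) the projection $\ptilde \colon \Xtilde \to \bP^n$ is birational, so $\Xtilde$ recovers $\bP^n = \bP(V)$ together with the birational morphism $\ptilde$; composing the rational inverse $\ptilde^{-1}$ with the other projection $\bsqtilde \colon \Xtilde \to \bP^\bsm$ recovers the rational map $\bsvarphi \colon \bP^n \dto \bP^\bsm$, and hence each component $\varphi_i \colon \bP^n \dto \bP^{m_i}$, which in turn determines $s_i$ up to scalar, i.e.\ the point of $\bP(V^\vee \otimes W_i)$. Thus $\bsvarphi$ is uniquely reconstructed from $[\Xtilde]$ — no $\PGL$ ambiguity here, in contrast to \pref{th:main}, precisely because we remember $\Xtilde \subset \bP^n \times \bP^\bsm$ rather than only $X \subset \bP^\bsm$. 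This gives a rational inverse to $\Phitilde$ defined wherever $[\Xtilde]$ has $\ptilde$ birational, so $\Phitilde$ is generically injective.

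Finally, to conclude birationality onto the component $\calH_0$ I would check that $\calH_0$ has dimension equal to $\dim \prod_i \bP(V^\vee \otimes W_i) = \sum_{i=1}^r (n+1)(m_i+1) - r$; equivalently, that the generic point $[\Xtilde]$ of $\calH_0$ is a \emph{smooth} point of the Hilbert scheme whose tangent space $H^0(\Xtilde, N_{\Xtilde/\bP^n\times\bP^\bsm})$ has exactly this dimension. The cleanest route is the description $\Xtilde = Z_\bss$ as the zero locus of a section of the globally generated bundle $\calE \coloneqq \bigoplus_i p^*\cO_{\bP^n}(1)\otimes q_i^* T_{\bP^{m_i}}(-1)$ of rank $|\bsm|$ on the smooth ambient space: since $\bss$ is general, $Z_\bss$ is smooth of the expected codimension $|\bsm|$, its normal bundle is $\calE|_{Z_\bss}$, and the Koszul complex resolving $\cO_{Z_\bss}$ lets one compute $h^0(N_{Z_\bss}) = h^0(\calE|_{Z_\bss})$ and $h^1(N_{Z_\bss}) = 0$. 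The expected obstacle is this cohomological count: one must verify the higher Koszul cohomology $H^{>0}$ vanishes and that $h^0(\calE|_{Z_\bss})$ matches the parameter count, which amounts to a Borel--Weil--Bott / Künneth computation on the product of projective spaces; granting the dimension equality and $H^1(N_{Z_\bss}) = 0$, the map $\Phitilde$ is a generically injective morphism between irreducible varieties of the same dimension, hence birational onto $\calH_0$, and the image component is generically smooth.
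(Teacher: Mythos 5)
Your overall strategy is the same as the paper's: generic injectivity (recovering $\bsvarphi = \bsqtilde\circ\ptilde^{-1}$ from the graph, with no $\PGL$ ambiguity) plus a tangent-space computation at $[\Xtilde]$ showing the relevant component of the Hilbert scheme has dimension $\sum_{i=1}^r (n+1)(m_i+1)-r$, and your final deduction (generically injective map into a component of the same dimension, hence dominant and birational onto it) is sound. The problem is that the tangent-space computation is precisely the content of the lemma, and you leave it as a ``granting'': you would need to verify, via the Koszul resolution of $\cO_{\Xtilde}$ and Bott vanishing on $\bP^n\times\bP^{\bsm}$, both that $H^1$ of the twisted ideal sheaf vanishes (so that restriction $H^0(\bP^n\times\bP^{\bsm}, \bigoplus_i p^*\cO_{\bP^n}(1)\otimes q_i^*T_{\bP^{m_i}}(-1)) \to H^0(\Xtilde, N_{\Xtilde/\bP^n\times\bP^{\bsm}})$ is surjective) and that its kernel is exactly $\bigoplus_{i=1}^r \bfk s_i$ and no larger. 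Neither is automatic, and the second in particular involves the higher wedge powers of the dual bundle; as written, your argument only gives the inequality $h^0(N_{\Xtilde/\bP^n\times\bP^{\bsm}})\geq \dim\overline{\im\Phitilde}$, which is the wrong direction.

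The paper closes exactly this gap without any Koszul or Borel--Weil--Bott computation: pulling back the Euler sequences gives a short exact sequence on $\Xtilde$,
\begin{align*}
0 \to \bigoplus_{i=1}^r \cO_{\Xtilde}(E_i) \to \bigoplus_{i=1}^r \ptilde^*\cO_{\bP^n}(1)\otimes W_i \to N_{\Xtilde/\bP^n\times\bP^{\bsm}} \to 0,
\end{align*}
so the whole count reduces to $H^0(\Xtilde,\cO_{\Xtilde}(E_i))=\bfk s_i$ and $H^1(\Xtilde,\cO_{\Xtilde}(E_i))=0$; moreover this presentation identifies the resulting surjection $\bigoplus_i V^{\vee}\otimes W_i/\bfk s_i \simto H^0(N_{\Xtilde/\bP^n\times\bP^{\bsm}})$ with the differential $(d\Phitilde)_{\bsvarphi}$, an identification the paper reuses in the proof of \pref{th:main}.(\pref{it:dominance}). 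If you want to keep your route, you must actually carry out the Künneth/Bott computation; otherwise I would recommend replacing it with the Euler-sequence argument.
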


\begin{proof}
Since we can recover $\bsvarphi$ from the graph $\Xtilde$ of $\bsvarphi$,
{\black the rational map} $\Phitilde$ is generically injective.
Hence it suffices to show that for general $\bsvarphi$,
$  \Hilb \lb \bP^n \times \bP^{\bsm} \rb$ is smooth at $\Phitilde(\bsvarphi ) $ and
the dimension of $  \Hilb \lb \bP^n \times \bP^{\bsm} \rb$ at $\Phitilde(\bsvarphi ) $
is equal to that of  $\prod_{i=1}^r \bP \lb V^{\vee} \otimes W_i \rb$.


Consider the differential
\begin{align} \label{eq:dPhi}
 \lb d \Phitilde \rb_{\bsvarphi} \colon
  T_{\bsvarphi} \lb \prod_{i=1}^r \bP \lb V^{\vee} \otimes W_i \rb \rb
   \to T_{[\Xtilde]} \Hilb \lb \bP^n \times \bP^{\bsm} \rb.
\end{align}
Let $\mathrm{Im} (\Phitilde) \subset \Hilb \lb \bP^n \times \bP^{\bsm} \rb $ be the closure of the image of $\Phitilde $
with reduced structure.
Then $ \lb d \Phitilde \rb_{\bsvarphi}$ factors as 
\begin{align}
 T_{\bsvarphi} \lb \prod_{i=1}^r \bP \lb V^{\vee} \otimes W_i \rb \rb
   \to T_{[\Xtilde]} \mathrm{Im} (\Phitilde) \subset T_{[\Xtilde]} \Hilb \lb \bP^n \times \bP^{\bsm} \rb.
\end{align}
We note that if $ \bsvarphi$ is general,
 $\mathrm{Im} (\Phitilde)$ is smooth at $[\Xtilde]$ and 
 $ T_{\bsvarphi} \lb \prod_{i=1}^r \bP \lb V^{\vee} \otimes W_i \rb \rb
   \to T_{[\Xtilde]} \mathrm{Im} (\Phitilde)$ is surjective.
Thus, for general $ \bsvarphi$, we have
\begin{align}
\rank \lb d \Phitilde \rb_{\bsvarphi} = \dim \mathrm{Im}  \lb\Phitilde \rb \leq \dim_{[\Xtilde]} \Hilb \lb \bP^n \times \bP^{\bsm} \rb \leq \dim T_{[\Xtilde]} \Hilb \lb \bP^n \times \bP^{\bsm} \rb.
\end{align}
If $ \lb d \Phitilde \rb_{\bsvarphi}$ is surjective,
$\rank \lb d \Phitilde \rb_{\bsvarphi} =\dim T_{[\Xtilde]} \Hilb \lb \bP^n \times \bP^{\bsm} \rb$,
and hence 
\begin{align}\label{eq:4terms}
\rank \lb d \Phitilde \rb_{\bsvarphi} = \dim \mathrm{Im}  \lb\Phitilde \rb = \dim_{[\Xtilde]} \Hilb \lb \bP^n \times \bP^{\bsm} \rb = \dim T_{[\Xtilde]} \Hilb \lb \bP^n \times \bP^{\bsm} \rb
\end{align}
holds. 
From the last equality of \pref{eq:4terms},
$  \Hilb \lb \bP^n \times \bP^{\bsm} \rb$ is smooth at $[\Xtilde]= \Phitilde(\bsvarphi ) $.
Furthermore, $ \dim_{[\Xtilde]} \Hilb \lb \bP^n \times \bP^{\bsm} \rb =\dim \mathrm{Im}  \lb\Phitilde \rb  =\dim \prod_{i=1}^r \bP \lb V^{\vee} \otimes W_i \rb$
since $\Phitilde$ is generically injective.
Thus this lemma follows from the surjectivity of $ \lb d \Phitilde \rb_{\bsvarphi}$ for general $\bsvarphi $.

In the rest of the proof,  we show the surjectivity of  $ \lb d \Phitilde \rb_{\bsvarphi}$ for general $\bsvarphi$.
Recall that $\Xtilde \subset \bP^n \times \bP^\bsm$ is the zero locus of a general section
$\bss \in  H^0 \lb \bP^n \times \bP^\bsm, \cE \rb$,
where $ \cE = \bigoplus_{i=1}^r p^* \cO_{\bP^n}(1) \otimes q_i^* T_{\bP^{m_i}}(-1) $.
Hence $\Xtilde$ is smooth with $\codim (X,\bP^n \times \bP^{\bsm}) = \rank \cE$,
 and the normal bundle $N_{\Xtilde/ \bP^n \times \bP^\bsm}$ is isomorphic to 
$\cE |_{\Xtilde} = \bigoplus_{i=1}^r \ptilde^* \cO_{\bP^n}(1) \otimes \qtilde_i^* T_{\bP^{m_i}}(-1)$.
We note that the isomorphism $N^{\dual}_{\Xtilde/ \bP^n \times \bP^\bsm} = I_{\Xtilde/ \bP^n \times \bP^\bsm} / I^2_{\Xtilde/ \bP^n \times \bP^\bsm} \cong \cE^{\dual} |_{\Xtilde} $
is induced from the surjective homomorphism $ \cE^{\dual} \twoheadrightarrow  I_{\Xtilde/ \bP^n \times \bP^\bsm} \subset \cO_{ \bP^n \times \bP^\bsm}$ defined by $\bss$.

From the Euler sequences on $\bP^{m_i}$'s,
we have an exact sequence on $\Xtilde$
\begin{align}
0 \rightarrow  \bigoplus_{i=1}^r \ptilde^* \cO_{\bP^n}(1) \otimes \qtilde_i^* \scO_{\bP^{m_i}}(-1) \rightarrow  \bigoplus_{i=1}^r \ptilde^* \cO_{\bP^n}(1) \otimes W_i  \rightarrow  \bigoplus_{i=1}^r \ptilde^* \cO_{\bP^n}(1) \otimes \qtilde_i^* T_{\bP^{m_i}}(-1) \rightarrow 0.
\end{align}
Since $\ptilde^* \cO_{\bP^n}(1) \otimes \qtilde_i^* \scO_{\bP^{m_i}}(-1) \simeq \scO_{\Xtilde}(E_i)$ and
$h^1(\Xtilde, \scO_{\Xtilde}(E_i)) =0 $, 
we see that the linear map 
\begin{align}\label{eq:tangent}
 \bigoplus_{i=1}^r V^{\vee} \otimes W_i =H^0 \left( \Xtilde, \bigoplus_{i=1}^r \ptilde^* \cO_{\bP^n}(1) \otimes W_i \right) \rightarrow H^0 \left(\Xtilde,  \bigoplus_{i=1}^r \ptilde^* \cO_{\bP^n}(1) \otimes \qtilde_i^* T_{\bP^{m_i}}(-1) \right)
\end{align}
is surjective with the kernel $ \bigoplus_{i=1}^r H^0( \Xtilde , \scO_{\Xtilde}(E_i) )  =\bigoplus_{i=1}^r  \bfk s_i $.
The induced isomorphism
\begin{align}
 \bigoplus_{i=1}^r V^{\vee} \otimes W_i / \bfk s_i
  \simto
 H^0 \lb \Xtilde,  \bigoplus_{i=1}^r \ptilde^* \cO_{\bP^n}(1) \otimes \qtilde_i^* T_{\bP^{m_i}}(-1) \rb
\end{align}
of vector spaces is identified
with the differential \pref{eq:dPhi}
under the isomorphisms
\begin{align}
 T_{\bsvarphi} \lb \prod_{i=1}^r \bP \lb V^{\vee} \otimes W_i \rb \rb
 \cong \lb \bigoplus_{i=1}^r (V^{\vee} \otimes W_i / \bfk s_i ) \otimes (\bfk s_i)^{\vee} \rb
  \simto \bigoplus_{i=1}^r V^{\vee} \otimes W_i / \bfk s_i
\end{align}
and
\begin{align}
 T_{[\Xtilde]} \Hilb \lb \bP^n \times \bP^{\bsm} \rb
  \cong H^0 \lb \Xtilde, N_{\Xtilde/ \bP^n \times \bP^\bsm} \rb
 \simto
 H^0 \lb \Xtilde,  \bigoplus_{i=1}^r \ptilde^* \cO_{\bP^n}(1) \otimes \qtilde_i^* T_{\bP^{m_i}}(-1) \rb 
\end{align}
induced by $s_i$'s.
This is a special case of the following general fact:
Let $Y$ be a projective variety,
$E$ be a vector bundle on $Y$,
$s$ be a global section of $E$, and
$Z=s^{-1}(0)$ be the zero locus of $s$.
When $Z$ is a complete intersection
(i.e., when $\codim Z=\rank E$),
the differential of
  \[
   \bP(H^0(E)) \dashrightarrow \Hilb(Y) : s \mapsto [Z]
  \]
can be identified with the natural morphism
  \[
  H^0(E)/\bfk s \rightarrow H^0(E|_Z).
  \] 
\end{proof}



\begin{proof}[Proof of \pref{th:main}.(\pref{it:dominance})]
Take general $\bsvarphi$.
We study the tangent space of $\Hilb (\bP^{\bsm} ) $ at $[X]$, which is isomorphic to $H^0(X, N_{X /  \bP^{\bsm}})$.
By $ | \bsm | \geq 2n-1$ and \pref{eq:dimX_n-1-2},
$\bsqtilde \colon \Xtilde \rightarrow X$ is an isomorphism in this case.
The diagram
\begin{equation}
\begin{aligned}
  \xymatrix{  
    0 \ar[r] & T_{ \Xtilde} \ar[r]  \ar[d]_{\wr}
    & T_{\bP^n \times \bP^{\bsm}} |_{\Xtilde} \ar[r] \ar[d] & N_{\Xtilde / \bP^n \times \bP^{\bsm}}  \ar[r] \ar[d]  & 0
    \\
    0 \ar[r] &  \bsqtilde^* T_X   \ar[r] &  \bsqtilde^* (T_{\bP^{\bsm}} |_{X}) \ar[r] & 
     \bsqtilde^* N_{X /  \bP^{\bsm}} \ar[r] & 0 
  }
\end{aligned}
\end{equation}
{\black induces}
an exact sequence
\begin{align}
0 \rightarrow \ptilde^* T_{\bP^n} \rightarrow N_{\Xtilde / \bP^n \times \bP^{\bsm}}  \rightarrow    \bsqtilde^* N_{X /  \bP^{\bsm}} \rightarrow 0
\end{align}
on $\Xtilde$.
Since
\begin{align}
 H^0 \lb \Xtilde, \ptilde^* T_{\bP^n} \rb
  \cong H^0 \lb \bP^n, T_{\bP^n} \rb
  \cong V^{\vee } \otimes V / \bfk \id_{V}
\end{align}
for
$
 \id_V \in \Hom (V,V) \cong V^{\vee } \otimes V
$
and
$
 h^1 ( \Xtilde, \ptilde^* T_{\bP^n} )  =0,
$
we have an exact sequence
\begin{align}\label{eq:tangent_map}
0 \rightarrow V^{\vee } \otimes V / \bfk \id_{V} \rightarrow  \bigoplus_{i=1}^r V^{\vee} \otimes W_i / \bfk s_i \stackrel{d}{\rightarrow} H^0(X,N_{X /  \bP^{\bsm}}) \rightarrow 0,
\end{align}
where {\black the middle term
\begin{align}
 H^0 \lb \Xtilde, N_{\Xtilde / \bP^n \times \bP^{\bsm}} \rb
  \cong \bigoplus_{i=1}^r V^{\vee} \otimes W_i / \bfk s_i
\end{align}
can be identified with
$
 T_{\bsvarphi} \lb \prod_{i=1}^r \bP \lb V^{\vee} \otimes W_i \rb \rb
$
as in the proof of \pref{lm:birational}.
Then the map $d$ in \pref{eq:tangent_map} can be identified
with $(d \Phi)_{\bsvarphi}$,
and $V^{\vee } \otimes V / \bfk \id_{V} $ can be identified
with} the tangent space of the $\PGL(n+1, \bfk)$-orbit of $\bsvarphi$.
By \pref{th:main}.(\pref{it:general}),(\pref{it:tolines}),
a general fiber of $\Phi$ consists {\black of} at most two $\PGL(n+1, \bfk)$-orbits.
{\black Note} that the dimension of the $\PGL(n+1, \bfk)$-orbits is equal
to that of $\PGL(n+1, \bfk)$ since $\bsvarphi$ is birational.
Hence {\black we have}
$
 \dim  \im(\Phi) = \dim \prod_{i=1}^r \bP(V^{\vee} \otimes W_i) - \dim \PGL(n+1, \bfk).
$
Thus $\dim \im (\Phi)  $ is equal to $h^0(X,N_{X /  \bP^{\bsm}})$ by \pref{eq:tangent_map},
which is the dimension of the tangent space of $\Hilb (\bP^{\bsm} )$ at $[X]$.
This means that $\Hilb (\bP^{\bsm} )$ is smooth at $[X]$ of dimension $h^0(X,N_{X /  \bP^{\bsm}}) = \dim \im (\Phi) $.
Hence $\Phi$ is dominant onto an irreducible component.
\end{proof}

\begin{remark}
				In the case $n = 3$ and $\bsm = (2^r)$,
				the condition $ | \bsm | \geq 2n-1$ is $2r \geq 5,$ 
that is $r \geq 3$.
As in \cite[Section 6]{MR3095002},
the closure of the image of $\Phi$ is a cubic hypersurface in an irreducible component $\bP^8=\bP (W_1^{\vee} \otimes W_2^{\vee}) \subset \Hilb \lb \bP^{\bsm} \rb $ for $r=2$,
so
this is sharp in this case.
We do not know whether the condition $ | \bsm | \geq 2n-1$ is sharp or not in general.
\end{remark}

\section{Grassmann tensors as Chow forms}
\label{sc:hartley}

We recall the standard methods for projective reconstruction from a geometric point of view.
Define an index set 
\begin{align}
 B(n,\bsm) \coloneqq \lc \bsalpha = (\alpha_1,\dots, \alpha_r) \in \bN^r \relmid
 1 \le \alpha_i \le m_i \text{ for any } i=1,\dots, r 
 \text{ and } \sum_{i=1}^r \alpha_i = n+1 \rc.
\end{align}
For an $n$-dimensional variety $X \subset \bP^{\bsm}$ 
and $\bsalpha \in B(n,\bsm)$,
let
\begin{align}
 \scZ(X) \subset G(\bsm-\bsalpha,\bP^{\bsm}) 
 \coloneqq \prod_{i=1}^r G(m_i-\alpha_i,\bP(W_i))
\end{align}
denote the set of points in $G(\bsm-\bsalpha,\bP^{\bsm})$ 
corresponding to $r$-tuples $(U_1,\dots U_r)$ of linear subvarieties 
$U_i \subset \bP(W_i)$ of codimension $\alpha_i$ such that
\begin{align}
 \label{eq:equiv1}
  X \cap \prod_{i=1}^r U_i \ne \emptyset.
\end{align}
Here, $G(m_i-\alpha_i,\bP(W_i))$ is the Grassmannian of
$(m_i-\alpha_i)$-planes in $\bP(W_i)$,
which is embedded in 
$\bP\lb \bigwedge^{m_i+1-\alpha_i}W_i \rb = \bP\lb \bigwedge^{\alpha_i}W_i^\vee \rb$
by the Pl\"ucker embedding for $i=1,\dots, r$.

\begin{theorem}[{\cite[Theorem 3.1]{Hartley2009}, \cite[Theorem 2]{6909462}}] 
 \label{th:equiv}
  Assume that $\bsvarphi$ is generic.
 For $\alpha \in B(n,\bsm)$ and the multiview variety $X \subset \bP^{\bsm}$,
 the set $\scZ(X) \subset G(\bsm-\bsalpha,\bP^{\bsm}) $ is a hypersurface defined by the multilinear equation
 \begin{align} \label{eq:equiv2}
  \sum_{\sigma_1,\dots, \sigma_r} A^{\sigma_1,\dots, \sigma_r}
  p^1_{\sigma_1}\dots p^r_{\sigma_r} = 0,
\end{align}
in $G(\bP^{\bsm},\bsalpha)$,
where $p^i=\ld p^i_{\sigma_i} \mid 
 \sigma_i=(\sigma_{i,1},\dots, \sigma_{i,\alpha_i}), 
 1 \le \sigma_{i,1} < \dots 
 <\sigma_{i,\alpha_i} \le m_i +1\rd$
is the Pl\"ucker coordinates 
 of the Grassmannian $G(m_i-\alpha_i , \bP(W_i))$ for each $i=1,\dots, r$.
Moreover, the tensor 
\begin{align}
 A \coloneqq \lb A^{\sigma_1,\dots, \sigma_r} \rb 
 \in \bigotimes_{i=1}^r \bigwedge^{\alpha_i} W_i,
\end{align}
is uniquely determined up to scalar
from sufficiently many 
 \emph{subspace correspondences}.
 \footnote{Just as a point on the multiview variety $X$ is called a point correspondence,
 a point in $\cZ(X)$ is called a \emph{subspace correspondence}.}
\end{theorem}
The tensor $A$, called the \emph{Grassmann tensor of profile $\alpha$} 
for a camera configuration $\bsvarphi$
in computer vision,
is an analog of the \emph{Chow form}
of the multiview variety $X \subset \bP^{\bsm}$ 
(see, e.g., \cite[Section 3.2]{MR2394437}).
Note that the Grassmann tensors of profile $(2,2), (2,1,1)$, and $(1,1,1,1)$ 
are classically known as the fundamental matrix, the trifocal tensor,
and the quadrifocal tensor, respectively.

The \emph{projective reconstruction
theorem} by Hartley and Schaffalitzky can be stated as follows:
\begin{theorem}[{\cite[Section 5]{Hartley2009}}] 
 \label{th:original}
 Assume that $\bsvarphi$ is generic.
 Fix $\alpha \in B(n,\bsm)$ and let $A$ be
 a Grassmann tensor of profile $\alpha$.
\begin{enumerate}
 \item 
If $\bsm \ne (1^{n+1})$,
then $\bsvarphi$ is uniquely determined by $A$ up to the actions of $\PGL(n+1, \bfk)$.
 \item 
If $\bsm = (1^{n+1})$,
then two candidates of $\bsvarphi$ are obtained by $A$ up to the actions of $\PGL(n+1, \bfk)$.
\end{enumerate}
\end{theorem}

Since the multiview variety $X$
determines its associated hypersurface $\cZ(X)$
and the Grassmannian tensor $A$,
\pref{th:original} implies \pref{th:main}.(1),(2).

An algorithm for the projective reconstruction
for general $n$ and $\bsm$
using Grassmann tensors
has been implemented
in \cite{Hartley2009},
and applied
to the analysis of 
dynamic scenes
\cite{Wolf2002, HV2008}.
In contrast,
our proof of the projective reconstruction theorem is based
on the analysis of divisors on the multiview variety,
and it is an interesting problem to see if it can lead to a new algorithm
for the reconstruction.

\bibliographystyle{amsalpha}
\bibliography{reconstruction}
\end{document}